\newtheorem{theorem}{Theorem}[section]
\newtheorem{proposition}[theorem]{Proposition}
\newtheorem{lemma}[theorem]{Lemma}
\newtheorem{corollary}[theorem]{Corollary}
\newtheorem{example}{Example}
\newtheorem{definition}[theorem]{Definition}
\theoremstyle{remark}
\newcommand \NN{\mathbb{N}}
\newcommand \eps{\varepsilon}
\newcommand \emp{\emptyset}
\newcommand \vs{\vec{s}}
\newcommand \ie{\emph{i.e.}}
\newcommand \cM{\mathcal{M}}
\newcommand \bB{\mathbb{B}}
\newcommand \cB{\mathcal{B}}
\newcommand \bC{\mathbf{C}}
\newcommand \cV{\mathcal{V}}
\newcommand \cX{\mathcal{X}}
\newcommand \cZ{\mathcal{Z}}
\newcommand \cF{\mathcal{F}}
\newcommand \cS{\mathcal{S}}
\newcommand \restrict{\upharpoonright}
\newcommand{\bM}{\mathbb{M}}
\begin{document}

\title[Cooperative Boolean systems]{Cooperative Boolean systems with generically long attractors I}
\author{Winfried Just and Maciej Malicki}


\address{Winfried Just: Department of Mathematics, Ohio University, Athens, OH 45701, USA \\
Maciej Malicki: Institute of Mathematics of the Polish Academy of Sciences, Sniadeckich 8, 00-956, Warsaw, Poland}
\email{mathjust@gmail.com, mamalicki@gmail.com}


\keywords{Boolean networks, cooperative dynamical systems,  exponentially long attractors, chaotic dynamics}
\subjclass[2000]{34C12, 39A33, 94C10}

\begin{abstract}
We study the class of cooperative Boolean networks whose only regulatory functions are COPY, binary AND, and binary OR.  We prove that for all sufficiently large~$N$ and~$c < 2$ there exist Boolean networks in this class that have an attractor of length $> c^N$ whose basin of attraction comprises an arbitrarily large fraction of the state space.  The existence of such networks contrasts with results on
various other types of dynamical systems that show nongenericity or absence of non-steady state attractors under the assumption of cooperativity.
\end{abstract}

\maketitle


\section{Introduction}\label{intro}

Understanding the role of feedback is crucial in the study of dynamical systems; see, \emph{e.g.,} \cite{Thieffry, THdA} for relevant surveys.
The absence of negative feedback loops tends to favor steady state attractors. For example, continuous flows without negative feedback loops are known as \emph{monotone} systems. In these systems trajectories converge generically towards an equilibrium under mild regularity hypotheses;  see \emph{e.g.}  \cite{Enciso:Hirsch:Smith:2008,Hirsch:1988,HalSmith}.
Similarly,~\cite{RR, RRT} show that in Boolean networks with asynchronous updating negative feedback loops are necessary for the existence of attracting limit cycles.

Here we study a related question for Boolean networks with synchronous updating, which is the updating scheme originally proposed by Stuart Kauffman in his seminal papers \cite{Kauffman:N1969,Kauffman:JTB1969}.  More specifically, we study the class of \emph{cooperative} such Boolean networks which is defined by the total absence of negative interactions.  Cooperative Boolean networks are the ones whose regulatory functions can be represented by compositions of AND, OR, and COPY functions, without the use of negations.  This is a more restrictive notion than the absence of negative feedback loops, which are defined as containing an odd number of negative interactions.

In Boolean networks with synchronous updating negative feedback loops are not necessary for the existence of non-steady-state attractors, even exponentially long ones.  It is shown in \cite{Arcena, JE1, Sontag:mono:Arxiv2007} that $N$-dimensional cooperative Boolean networks can have attractors of size up to $\binom{N}{\lfloor N/2\rfloor} \sim \frac{2^N}{\sqrt{N}}$. Note, however, that this upper bound implies that no attractor in sufficiently high-dimensional cooperative Boolean networks can comprise a fixed fraction of the state space, whereas without the assumption of cooperativity even the whole state space can form a cyclic attractor
of length~$2^N$.
Moreover, in a randomly chosen $N$-dimensional cooperative Boolean network, with probability approaching one as $N \rightarrow \infty$, a steady-state attractor will be reached from a randomly chosen initial condition in at most two steps~\cite{npcnote}.

This naturally raises the question whether there exist, for given constants~$p,c$ with $0 < p < 1 < c < 2$, cooperative Boolean networks of arbitrarily large dimension~$N$ for which the union of the basins of attraction of limit cycles of length $> 2^c$ comprises a fraction of at least~$p$ of the state space; a property that we call \emph{$p$-$c$-chaos.}  In this paper we answer this question in the affirmative.  The Boolean systems that we construct use only regulatory functions with one or two inputs and are such that with probability $> p$ the trajectories of any two randomly chosen initial conditions will become equal in the long run, a property we call \emph{$p$-coalescence.}  Note that the latter implies that the basin of attraction of a single attractor of length $> 2^c$  comprises a fraction of $>p$ of the state space.

Before we formally state our theorems in Section~\ref{resultsec}, let us make some additional comments on their relation to the literature on the subject.

The dynamics of Boolean networks tends to fall either into the \emph{ordered regime} that is characterized by short attractors, a large proportion of eventually frozen nodes, and low sensitivity to perturbations of initial conditions, or into the \emph{chaotic regime} that is characterized by very long attractors, very few eventually frozen nodes, and high sensitivity to perturbations of initial conditions~\cite{origins}.  Note that $p$-$c$-chaos is a direct formalization of the first hallmark of chaos.  It also implies, for suitable choices of~$p$ and~$c$, a small proportion of eventually frozen nodes (see Proposition~\ref{fluidprop}).  On the other hand, $p$-coalescence implies low sensitivity of long-term behavior to perturbations of
initial conditions.  Thus the dynamics of the Boolean networks we construct here are extremely chaotic in one sense and highly ordered in another sense.

There exists a large body of literature on the \emph{expected} dynamics of so-called \emph{random Boolean networks (RBNs)} with certain restrictions on their connectivity and regulatory functions (see, \emph{e.g.,} the surveys \cite{ACK, Drossel:review, origins}). In these studies one considers a class $\cB$ of Boolean networks with a given (usually uniform) probability distribution and tries to determine the expected values of the length of attractors, the proportion of eventually frozen nodes, or measures of sensitivity to initial conditions.  For example, let $\cB_{N,K}$ be the class of Kauffman's NK-networks, that is, Boolean networks with $N$ variables and the sole restriction that each regulatory function can take at most $K$ inputs.
Then the expected dynamics of~$\cB_{N,K}$ becomes more chaotic as $K$ increases from~1 to~$N$ in terms of all three features described above.
In particular, $p$-$c$-chaos is generic in  $\cB_{N,N}$ when $p < 1$ and $c < \sqrt{2}$. This contrasts sharply with the situation for randomly chosen cooperative Boolean networks without any restrictions on the connectivity for which the above mentioned  results of~\cite{npcnote} show that $p$-$c$-chaos is about as non-generic as possible.  In a similar vein, simulation studies~\cite{Sontag:Laubenbacher} indicate that decreasing the amount of negative feedback in RBNs from~$\cB_{N,K}$ has the effect of decreasing the average lengths of the attractors.

While Boolean networks with fewer inputs per node tend to have on average shorter attractors, it is still possible to construct \emph{quadratic} Boolean networks (that is,
such that each node takes at most two inputs) with attractors whose length is a fixed fraction of the state space of very large
dimensions~\cite{Kaufman:Drossel:EPJ2005,Paul:Drossel:PRE2006}.  Cooperative \emph{bi-quadratic} Boolean networks (that is, such that the number of in- and outputs per node is bounded from above by~2) can still have an attractor of size $>c^N$ for each $c < 2$ and all sufficiently large~$N$~\cite{JE, JENDST}.
However, if in addition to the latter we require that at least a fixed fraction~$\alpha > 0$ of all nodes take \emph{exactly} two inputs, then the length of attractors is bounded from above by $c_\alpha^N$ for some $c_\alpha < 2$.  In particular, $c_1 = 10^{1/4}$ and the bound is sharp~\cite{JE, JENDST}. The constructions in~\cite{JE, JENDST} do not appear to be adaptable to yielding $p$-$c$-chaotic cooperative  Boolean networks for $p+c$ arbitrarily close to~3, which is achieved by the alternative construction described here. The Boolean networks in our construction are also bi-quadratic.  While the latter property is not actually needed for the construction, it adds interest to our results in view of these bounds on attractor length.

\section{Basic definitions}\label{defsec}

Let $[N]$ denote the set $\{1, \ldots , N\}$ and let $2^N$ denote the set of all binary vectors with coordinates in~$[N]$.
An \emph{$N$-dimensional Boolean system $\bB$ (or Boolean network)} is a pair $\bB = (2^N, f)$, where $f: 2^N \rightarrow 2^N$ is a Boolean function.  While $f$ uniquely determines~$\bB$, we will make a careful verbal distinction between
 a \emph{Boolean network,} which is
a pair,
a \emph{Boolean function} $f:2^r \to 2^u$, and a \emph{partial Boolean function} $f: \subseteq 2^r \to 2^u$.

For a Boolean network $\bB = (2^N, f)$,  the updating function~$f$ determines the successor state $\vs(t+1)$ of a Boolean state $\vs(t) \in 2^N$ one time step later according to the formula

\begin{equation}\label{upeqn}
\vs(t+1) = f(\vs(t)).
\end{equation}

Note that $f = (f_1, \ldots , f_N)$, where  the components $f_i: 2^N \rightarrow \{0,1\}$ give us the successor states for individual variables
according to

\begin{equation}\label{upeqni}
s_i(t+1) = f_i(\vs(t)).
\end{equation}

We will sometimes refer to~$f_i$ as the \emph{regulatory function for variable~$i$.}
Since the state space $2^N$ is finite, the trajectory~$\vs(0), \vs(1), \ldots$ of an initial state~$\vs(0)$ must eventually revisit the same state and thus must either reach a \emph{fixed point} $\vs(t)$ ({\ie}  a state with $f(\vs(t)) = \vs(t)$) or a \emph{cyclic attractor} of length $\leq 2^N$.

Now fix a Boolean network~$\bB = (2^N, f)$. For $\vs = (s_1, \ldots, s_N)$ and $i \in [N]$ let $\vs^{\ j*} = (s_1, \ldots , s_{j-1}, 1 - s_j, s_{j+1}, \ldots, s_N)$ be the state obtained from $\vs$ by a one-bit flip at position~$j$.  If for some $\vs$ and~$j$ we have
$f_i(\vs) \neq f_i(\vs^{\ j*})$, then we say that \emph{variable~$j$ is an input of variable~$i$} and \emph{variable~$i$ is an output of variable~$j$.}  We call $\bB$ \emph{quadratic} if each variable takes at least one and at most two inputs
and \emph{bi-quadratic} if each variable takes one or two inputs and has at most two outputs.
 A partial Boolean function~$f$ is \emph{cooperative} if $\vs \leq \vs^{\ +}$ implies $f(\vs) \leq f(\vs^{\ +})$, where $\leq$ denotes the coordinatewise partial order of Boolean vectors.
By considering its conjunctive or disjunctive normal form, it is easy to see that a Boolean function is cooperative iff it can be represented as a composition of the functions COPY, AND, OR and Boolean constants.
Note that $\bB$ is quadratic and cooperative iff each regulatory function is either COPY, binary OR, or binary AND.
We specifically exclude Boolean constants in this definition since our construction works without their help, which leads to a slightly stronger result.

The following fact will be useful in some of our constructions.

\begin{proposition}\label{extendprop}
Let $f: \subseteq 2^r \to 2^u$ be a partial Boolean function.

\smallskip

\noindent
(a) If the domain of~$f$ consists of vectors that are pairwise incomparable in the coordinatewise partial order, then~$f$ is cooperative.

\smallskip

\noindent
(b) If $f$ is cooperative, then~$f$ can be extended to a cooperative total Boolean function $f^*: 2^r \to 2^u$.
\end{proposition}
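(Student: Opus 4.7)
For part~(a), I would simply observe that the hypothesis makes the cooperativity condition hold vacuously. Whenever $\vs, \vsp \in \mathrm{dom}(f)$ satisfy $\vs \leq \vsp$, the pairwise-incomparability assumption forces $\vs = \vsp$, whence $f(\vs) = f(\vsp)$ and in particular $f(\vs) \leq f(\vsp)$. No further argument is needed.

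For part~(b), the plan is to use the canonical ``upward closure'' extension, applied coordinate by coordinate on the output side. For each $i$ with $1 \leq i \leq u$ set
\[
A_i = \{\vs \in \mathrm{dom}(f) : f_i(\vs) = 1\},
\]
and define $f_i^* : 2^r \to \{0,1\}$ by declaring $f_i^*(\vs) = 1$ if and only if there exists $\vs' \in A_i$ with $\vs' \leq \vs$. Equivalently, $f_i^*$ is the indicator of the upward closure of~$A_i$ in the coordinatewise order on~$2^r$. Then set $f^* = (f_1^*, \ldots, f_u^*)$.

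Cooperativity of~$f^*$ is then immediate from the construction: if $\vs \leq \vsp$ and $f_i^*(\vs) = 1$, any witness $\vs' \in A_i$ with $\vs' \leq \vs$ also satisfies $\vs' \leq \vsp$, so $f_i^*(\vsp) = 1$. The only nontrivial point is to check that $f^*$ agrees with~$f$ on $\mathrm{dom}(f)$, and this is precisely where the cooperativity of the partial function~$f$ actually enters. If $\vs \in \mathrm{dom}(f)$ with $f_i(\vs) = 1$, then $\vs$ itself is a witness, giving $f_i^*(\vs) = 1$; conversely, if $f_i^*(\vs) = 1$ via some witness $\vs' \in A_i$ with $\vs' \leq \vs$, then cooperativity of the partial~$f$ yields $f_i(\vs) \geq f_i(\vs') = 1$, so $f_i(\vs) = 1$ as required.

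There is no substantial obstacle in either part: (a) is essentially vacuous, and (b) reduces to a standard monotone-completion construction whose verification follows directly from the definitions. The only subtlety worth flagging is that the upward-closure definition is applied to each output coordinate independently, so no joint monotonicity has to be arranged by hand; the fact that the resulting vector-valued function is cooperative is forced as soon as each $f_i^*$ is monotone in~$\vs$.
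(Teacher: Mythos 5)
Your proof is correct and takes essentially the same approach as the paper: part (a) is the same immediate observation, and for part (b) the paper uses the order-dual of your construction, defining $f^*(\vs)_i$ as the minimum of $f(\vsp)_i$ over domain points $\vsp$ with $\vs \leq \vsp$ instead of your maximum over domain points below $\vs$. The verification (monotonicity by shrinking/growing the witness set, agreement on the domain via cooperativity of the partial function) is identical up to this duality, so no substantive difference.
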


\begin{proof}
Point (a) follows immediately from the definition of cooperativity.  For the proof of~(b) we can define the $i$-th coordinate $f^*(\vec{s})_i$ of $f^*(\vec{s})$ as the minimum of $f(\vec{s}^{\ +})_i$
over
$\vec{s}^{\ +}$ in the domain of~$f$ with~$\vs \leq \vs^{\ +}$.
\end{proof}

  For  $0 < p < 1 < c < 2$ we say that a Boolean system $\bB$ is
\emph{$c$-chaotic} if it contains an attractor of length  $> c^N$, and we say that $\bB$ is \emph{$p$-$c$-chaotic} if attractors of length~$> c^N$ will be reached with probability $> p$, that is, if a proportion of  $> p$ of all initial states belongs to basins of attraction of attractors of length~$> c^N$.

A variable~$s_i$ is \emph{eventually frozen} along a trajectory if
the value~$s_i(t)$ is fixed for all $\vs(t)$ in the attractor. We say that $\bB$
is \emph{$p$-fluid} if with probability~$>p$ the attractor of a randomly chosen initial state has a proportion of less than $1-p$ eventually frozen variables.

Two initial states $\vec{s}^{\ 0}(0), \vec{s}^{\ 1}(0)$ \emph{coalesce} if there exists a time $t \geq 0$ such that
$\vec{s}^{\ 0}(t) = \vec{s}^{\ 1}(t)$. We call $\bB$ \emph{$p$-coalescent} if any two randomly and independently chosen initial conditions  will coalesce with probability $> p$.

\section{Statement of main result}\label{resultsec}

We will prove the following result that strengthens Theorem~1(i) of~\cite{JE}.

\begin{theorem}\label{mainth}
Given any $0 < p < 1 < c < 2$, for all sufficiently large $N$ there exist  $p$-$c$-chaotic, $p$-coalescent, $N$-dimensional bi-quadratic cooperative Boolean networks.
\end{theorem}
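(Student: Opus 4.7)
The plan is to construct $\bB$ as a disjoint union of independent bi-quadratic cooperative sub-networks $\cC_1, \ldots, \cC_k$ on variable-sets of sizes $m_1, \ldots, m_k$, each $\cC_i$ possessing a distinguished cyclic attractor of length $\ell_i$, with the $\ell_i$ pairwise coprime. Then $\bB$ has an attractor of length $\prod_i \ell_i$, and two initial states of $\bB$ coalesce iff they coalesce in every component; so both the $c$-chaos and the coalescence targets reduce to the corresponding properties of each module.

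The central sub-task is thus to design, for each $m$ in some infinite index set, a bi-quadratic cooperative $\cC(m)$ on $m$ nodes with attractor length $\ell(m)$ satisfying $\log \ell(m)/m \to \log 2$, and such that two random initial conditions of $\cC(m)$ coalesce with probability $\geq 1 - \eta(m)$, where $\eta(m) \to 0$ rapidly. I would start from the long-cycle construction of \cite{JE, JENDST}, which already produces bi-quadratic cooperative networks with the prescribed attractor length, and augment it with a bounded number of AND/OR gates acting as phase-locking filters. Proposition~\ref{extendprop}(b) is the natural tool here: specify $f$ first on the intended antichain-attractor (where cooperativity comes for free by part~(a)) and extend cooperatively to all of $2^m$; one then has to verify by hand that this particular extension remains bi-quadratic and drives almost every pair of initial conditions to a common phase.

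To assemble $\bB$ of dimension $N$, I would pick $k$ distinct primes $\ell_1 < \cdots < \ell_k$ in the range of $\ell(\cdot)$, set $m_i$ with $\ell(m_i) = \ell_i$, arrange $\sum_i m_i \leq N$, and pad the remaining coordinates by trivial bi-quadratic cooperative stubs (e.g., single nodes with COPY self-loops, contributing length $1$ to the period and coalescing instantly). Because $\log \ell_i / m_i$ can be made arbitrarily close to $\log 2$, the product $\prod_i \ell_i$ exceeds $c^N$ for any prescribed $c < 2$ once $N$ is large; and $\prod_i (1 - \eta(m_i)) > p$ holds automatically once the $\eta(m_i)$ are summable. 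Pairwise coprimality is engineered by selecting prime values of $\ell(m)$; the existence of infinitely many such $m$ should come out of a direct tuning of the cycle construction, or failing that a soft number-theoretic input.

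The main obstacle is the module-level coalescence. A naive cycle module has $\ell(m)$ phase-shifted copies of the same orbit, so two trajectories landing on the same cycle almost never coincide at the same time, and coalescence fails outright. One must therefore break the time-translation symmetry of the cycle by strategically placed AND/OR gates that synchronize the phase of almost every initial condition --- without shrinking the attractor, destroying monotonicity, or exceeding in- or out-degree $2$. Quantifying the fraction of initial conditions that fail to synchronize, controlling $\eta(m)$, and preserving all structural hypotheses simultaneously is the combinatorial heart of the argument and the step where the construction will live or die.
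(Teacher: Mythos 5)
Your reduction does not actually reduce the problem: the module you ask for --- a bi-quadratic cooperative network on $m$ nodes with attractor length $\ell(m)$ satisfying $\log\ell(m)/m\to\log 2$ \emph{and} coalescence of almost every pair of initial conditions --- is essentially the full theorem restated, and your plan for producing it is the step that fails. You treat the difficulty as a phase-synchronization problem ("break the time-translation symmetry of the cycle"), but in the long-cycle constructions of \cite{JE, JENDST} the real obstruction is that a random initial condition is, with probability tending to $1$, nowhere near the long orbit in the first place: almost all states are not of the coding/antichain form that the cycle lives on, and generic cooperative dynamics collapses them to short attractors in a couple of steps (cf.\ \cite{npcnote}). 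So one must build an explicit funnel that detects non-coding ("crude") states and rewrites them onto the orbit, and this cannot be done with "a bounded number of AND/OR gates acting as phase-locking filters": it requires a dedicated filter circuit, a clock that tells the filter where on the data tape it is, and a mechanism that forces the clock's internal state itself to a fixed trajectory from random initial data (the paper uses a sorting network \`a la \cite{AKS} for this), all while keeping the auxiliary circuitry to $O(\log^2 n)$ nodes and $O(\log n)$ depth so that the attractor length still beats $c^N$ --- this size bookkeeping (the $MM$-system conditions, $L$-functions, $L$-sequences) is where the paper does most of its work, and the authors explicitly note that the constructions of \cite{JE, JENDST} do not appear adaptable to yield $p$-$c$-chaos, which is why they build the counter-based "circular tape" instead. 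Proposition~\ref{extendprop}(b) also does not rescue you here: a cooperative extension of a partial function defined on the intended orbit gives no control whatsoever over in/out-degrees or over where the extension sends the other $\approx 2^m$ states, which is exactly what determines coalescence.

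Two secondary points. First, your coprimality scheme needs attractor lengths that are prime, which is an unproven (and probably unobtainable) property of the cycle constructions; the paper avoids primality entirely by running counters modulo $n, n-1, \dots, n-\beta\log n+1$ on one tape and invoking the lower bound ${\rm LCM}(n,\dots,n-k)\ge (n-k)^k/k!$ of \cite{Fahri}, which already gives length $>c^N$ for any $c<2$. Second, your padding by "single nodes with COPY self-loops" destroys $p$-coalescence: such a node keeps its initial value forever, so two random initial conditions coalesce only if they agree on every stub, an event of probability $2^{-(\#\text{stubs})}$. Padding must instead copy values from variables that themselves coalesce (as the paper does with its dummy variables copying from $X$). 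The first of these is fixable by redesign; the central gap --- constructing the funnel from random initial states onto the long orbit within the cooperative, bi-quadratic, small-overhead constraints --- is the theorem itself and is missing from the proposal.
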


Let us briefly address the second hallmark of chaotic dynamics, few eventually frozen nodes, that was formalized above as $p$-fluidity.  In~\cite{JE}, some additional effort was required to ensure that the $c$-chaotic systems constructed there are also $p$-fluid.  This is not necessary here, since  for sufficiently large~$c$ the property of
$p$-$c$-chaos already implies $p$-fluidity.

\begin{proposition}\label{fluidprop}
Let  $0 <p < 1$.  Then there exists $c_p < 2$ such that for every $c$ with $c_p < c < 2$ every $p$-$c$-chaotic Boolean system is~$p$-fluid.
\end{proposition}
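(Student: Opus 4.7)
The plan is to exploit the trivial but useful fact that an attractor of a Boolean network lives inside the projection of the state space onto its non-frozen coordinates, so that the number of eventually frozen variables along a trajectory is controlled from above by the length of the attractor it reaches.

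More precisely, the first step is the following observation. Suppose an attractor $\cA$ of $\bB = (2^N,f)$ has exactly $k$ eventually frozen variables. Then all states of $\cA$ agree on these $k$ coordinates, so $\cA$ is determined by the restriction of its states to the remaining $N-k$ coordinates, yielding $|\cA| \le 2^{N-k}$. Consequently, if $|\cA| > c^N$ then $2^{N-k} > c^N$, which rearranges to $k/N < 1 - \log_2 c$.

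The second step is simply to choose the threshold. Put $c_p := 2^p$; since $p<1$ we have $c_p < 2$. For any $c$ with $c_p < c < 2$ we get $1 - \log_2 c < 1 - p$, so by the observation above every attractor of length $> c^N$ has a proportion strictly less than $1-p$ of eventually frozen variables. Therefore the set of initial states whose trajectory reaches an attractor of length $> c^N$ is contained in the set of initial states whose attractor has fewer than $(1-p)N$ eventually frozen variables. If $\bB$ is $p$-$c$-chaotic, the former has measure $> p$, hence so does the latter, which is precisely $p$-fluidity.

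There is no real obstacle here; the entire content is the counting inequality $|\cA| \le 2^{N-k}$, and the rest is arithmetic to convert the bound on attractor length into a bound on the fraction of frozen coordinates. The only care needed is to keep straight that the same set of initial conditions witnesses both properties, so the probability bound transfers without loss.
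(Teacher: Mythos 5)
Your argument is correct and is essentially the paper's own proof: the same threshold $c_p = 2^p$ and the same counting bound (an attractor with $k$ eventually frozen variables has length at most $2^{N-k}$, so length $> c^N$ forces a frozen proportion below $1-p$), with the probability bound transferred from $p$-$c$-chaos exactly as in the paper. No gaps; your write-up just phrases directly what the paper states as a contradiction.
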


\begin{proof}
Let~$p$ be as in the assumption.  Let $c_p = 2^{p}$, and let $c_p < c < 2$.  Consider an initial condition in the basin of attraction of an orbit of size  $>c^N$.  If a proportion of at least  $1- p$ of nodes are eventually frozen along the trajectory of this initial condition, then the attractor can have length at most $2^{pN}  = c_p^N$, which contradicts the conditions on~$c$.  However, $p$-$c$-chaos implies that a proportion of more than~$p$ initial condition belongs to basins of attractions of periodic orbits of length $> c^N$, and thus implies $p$-fluidity.
\end{proof}

\section{Some Terminology}\label{terminologysec}

Let us begin by introducing some terminology that will be used throughout the proof of Theorem~\ref{mainth}. The set of positive
integers will be denoted by~$\NN$ and
$\log n$ denotes logarithm in base~2.

We generalize the notion $s_i(t)$ to $s_X(t)$, where   $X$ is a subset of the set of all variables. Thus $s_X(t) \in 2^X$ is the vector with coordinates $s_i(t)$ for $i \in X$.

If~$\tau$ is a positive integer, and $f^\tau$ denotes the $\tau$-th iteration of the updating function~$f$ of a given Boolean system, then

\begin{equation}\label{upeqntau}
\vs(t+\tau) = f^\tau(\vs(t)).
\end{equation}

In the Boolean systems we are going to construct, we will often have subsets $X, Y$ of the variables so that the values
of $s_i(t+\tau)$ for $i \in Y$ will depend only on $s_X(t)$.
Thus we will construct Boolean systems that satisfy the following for some  function $g: 2^X  \rightarrow 2^Y$ and a fixed~$\tau$

\begin{equation}\label{fgXY}
s_Y(t+\tau) = f^\tau (\vs(t)) \restrict Y = g(s_X(t)).
\end{equation}
 We  will refer to the property expressed
by~(\ref{fgXY}) by writing that  \emph{$f$ computes $g$ on input $X$ and writes the output to $Y$ after~$\tau$ steps.}

The function~$g$ will sometimes be given a verbal or formal description rather than be expressed by a separate symbol.

If $(r(t): \ t \in \NN)$ is a given sequence of Boolean vectors in $2^Y$, then we will say
that $f$ \emph{writes~$r(t)$  to~$Y$
with probability~$> q$ for $t \geq \tau$} if for a proportion of~$>q$ of all initial conditions $\vs(0) \in 2^N$ we have:

\begin{equation}\label{fwritesg}
\forall t \geq \tau \ \ s_Y(t) = f^t(\vs(0)) \restrict Y = r(t).
\end{equation}

\section{Outline of the proof of Theorem~\ref{mainth}}\label{description}

Equipped with the terminology introduced in the previous section we are now ready to outline the proof of Theorem~\ref{mainth}.  More precisely,
we will actually prove Theorem~\ref{mainth} in this section with the exception of a number of technical lemmas, some formal definitions, and a brief technical description in Section~\ref{BoolLsec} of how the components of our construction fit together.

Let $0 < p < 1 < c < 2$ be as in the assumptions of Theorem~\ref{mainth}.

The proof of our theorem boils down to constructing, for sufficiently large~$N$, a suitable
updating function~$f$ for a Boolean systems $\bB = (2^N, f)$.  We need to assure that~$f$ is cooperative, bi-quadratic, and works as expected, with probability $> p$, for randomly chosen
(pairs of) initial conditions.

The set of Boolean variables $[N]$ will contain  pairwise disjoint sets~$X_i$, indexed by $i \in I$,
where $I = \{0, 1, \ldots , |I|-1\}$. With probability $> p$ the following will hold for all times~$t$ of the form $t = t_0 + k|I|$, where~$t_0$
is some fixed time and $k$ is a nonnegative integer:
With the possible exception of indices~$i$ in a small subset $Q \subset I$, each vector $s_{X_i}(t)$ will code an integer
$v_i(t) \in \{0, \ldots , n-1\}$ for some suitable value of~$n$ that depends on~$N$.  Moreover, again with the possible exception of $i \in Q$, the function~$f$ computes addition of~1 modulo $n-i$  on input $X_i$ and writes the output to $X_i$ after~$|I|$ steps.  Formally, the latter means that for times~$t$ as above

\begin{equation}\label{modaddI0}
\forall \, i \in I \backslash Q \quad v_i(t+ |I|) = v_i(t) + 1 \ mod \ (n - i).
\end{equation}

Let $\cX = \{X_i: \, i \in I\}$ and
let $\cV = \{V_i: \, i \in I\}$ be the functions that decode integers $v_i$ from certain vectors $s_{X_i}$.
We will also let $X = \bigcup_{i \in I} X_i$ and $Y = [N] \backslash X$.
A structure $\cM = (\bB, p, I, Q, n, \cX, Y, \cV, t_0)$ will be called
 an \emph{$M$-system} if it has the properties described above.

In the next section we will define a type of structure that we call an \emph{$MM$-system.}  Essentially, an $MM$-system is an~$M$-system with several
additional parameters and a number of size restrictions on how $N, n, |I|, |Q|, |X|, |Y|$ are related to each other.  A major concern here is that
  we need to assure that~$|Y|$ is small relative to~$|X|$
and that~$|Q|$ is sufficiently small relative to~$|I|$. One of these additional parameters will be~$c$; the other parameters will allow us to control the size restrictions mentioned above.
We will use notations like $\cM(\bB, p, c)$ for $MM$-systems or~$M$-systems with only the immediately relevant
parameters shown. The implicit understanding will be that the remaining parameters are as specified in Definitions~\ref{Mdef} and~\ref{MMdef}
of Section~\ref{MMsec}, where we will prove
that for $MM$-systems $\cM(\bB, p, c)$ of sufficiently large dimensions
the Boolean system $\bB$ is  $p$-$c$-chaotic (Lemma~\ref{keylem}).

Now the proof of Theorem~\ref{mainth} boils down to proving Lemma~\ref{res1lem} of Section~\ref{MMsec}, that is, to constructing an $MM$-system
such that the updating function $f$ is  bi-quadratic and cooperative and such that with probability~$> \sqrt{p}$ the trajectory of a randomly chosen initial state will
reach a specified state at time~$t_0$.   Thus with probability~$>p$ the trajectories of two randomly chosen initial states will coalesce at some time~$t \leq t_0$.

 We will deal with the size restrictions in
the definition of a~$MM$-system by showing that all functions that~$f$ needs to compute are what we call \emph{$L$-functions.}  We will formally define $L$-functions in Section~\ref{Boolsec}; at the moment it suffices to say that
$L$-functions are Boolean functions that can be implemented by cooperative bi-quadratic Boolean input-output systems of relatively low depth without adding too many variables.  The restriction on the depth means that  all necessary computations will require, for sufficiently large~$n$, only $\tau \ll |I|$ steps, so that the output can be written
to the required variables before it will interfere with other computations.

Let us conceptualize the collection of all $X_i$'s as a circular data tape, with $f$ simply copying the vector $s_{X_{i+1}}(t)$ to $s_{X_{i}}(t+1)$ for most~$i$'s, and also copying $s_{X_{0}}(t)$ to $s_{X_{|I|-1}}(t+1)$.

For $t \in \NN$ let $t^* = t \ mod \ |I|$.  We single out some $i_1, i_2 \in I$ with $i_2 = i_1 - \tau_1$ and construct~$f$ in such a way that $f$ computes $v_{i_1} + 1$ modulo $n - t^*$ in~$\tau_1$ steps on input~$X_{i_1}$ and writes the output of this operation to $X_{i_2}$.

Let us for a moment assume this can be done and show how it implies~(\ref{modaddI0}) for all times~$t \geq t_0$ under suitable assumptions on~$t_0$ and
$\vs(t_0)$.  Specifically, let us assume that $t_0 = i_1$  and that all vectors~$s_{X_i}(t_0)$ are coding, with the possible exceptions
of~$i \in Q$.  Then  $f$ computes $v_{i_1}(t_0) + 1$ modulo $n - i_1$  on input~$X_{i_1}$ and writes the output of this operation to $X_{i_2}$ after~$\tau_1$ steps.  In the next step we will have $v_{i_1}(t_0 + 1) = v_{i_1+1}(t_0)$, and thus $f$ computes $v_{i_1 + 1}(t_0) + 1$ modulo $n - (i_1+1)$  on input~$X_{i_1}$ and writes the output of this operation to $X_{i_2}$ after~$\tau_1$ steps.  And so on.  After~$|I|$ time steps, the data tape will have come round circle, and we will have $v_i(t_0+ |I|) = v_i(t_0) + 1 \ mod \ (n - i)$ for all~$i \in I$,
and~(\ref{modaddI0}) follows by induction for all $t \geq t_0$.

The alert reader will have noticed that the variables in $X_{i_2+1} \cup X_{i_2+2} \cup \dots \cup X_{i_1-1}$ are really not needed, since
$s_{X_{i_2}}(t+1)$ will \emph{not} be a copy of $s_{X_{i_2+1}}(t)$. We keep these variables here for the purpose of giving a uniform and more easily readable description of our construction.  A similar remark applies to the variables $X_{i_1+1} \cup X_{i_1+2} \cup \dots \cup X_{i_0-1}$,
where $i_0$ will be defined shortly.

In order to make the idea of~$f$ computing $v_{i_1} + 1$ modulo $n - t^*$ work, we will need a counter for tracking~$t^*$.
The easiest way of implementing the counter is as a set of variables~$R$ so that  $s_R(t)$ will be a code for
$t^* =  t\ mod\ |I|$.

\begin{lemma}\label{f1lem0}
There exists an $L$-function~$F_1$ that computes $v_{i_1} + 1$ modulo $n - i$ in $\tau_1$ steps on input $(X_{i_1}, R)$ and writes its output to $X_{i_2}$ if given input vectors
$(s_{X_{i_1}}, s_{R})$ such that $s_{X_{i_1}}$ codes the integer~$v_{i_1}$ and $s_{R}$ codes the integer~$i$.
\end{lemma}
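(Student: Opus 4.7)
The plan is to realize $F_1$ as a monotone Boolean circuit of depth $O(\log n)$ acting on a dual-rail thermometer encoding of both inputs. Concretely, I would let $X_{i_1}$ carry the thermometer bits $s_j := [j \leq v_{i_1}]$ together with their bitwise complements $\bar s_j$, and $R$ carry $\rho_j := [j < n - i]$ together with complements $\bar \rho_j$. Dual rails are essential here: the map $(v,i) \mapsto (v + 1) \bmod (n - i)$ is not monotone in $(v,i)$, but it becomes monotone once both inputs and outputs are stored alongside their negations.

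With this encoding, the ``no-wrap'' and ``wrap'' indicators $D = [v_{i_1} + 1 < n - i]$ and $\bar D = [v_{i_1} + 1 = n - i]$ both admit monotone DNF representations
\[
D = \bigvee_{\ell} s_\ell \wedge \bar s_{\ell + 1} \wedge \rho_{\ell + 1},
\qquad
\bar D = \bigvee_{\ell} s_\ell \wedge \bar s_{\ell + 1} \wedge \rho_\ell \wedge \bar \rho_{\ell + 1},
\]
using the boundary conventions $s_0 \equiv \bar s_n \equiv \bar \rho_n \equiv 1$. The output thermometer bits are then $u_j = s_{j - 1} \wedge D$ with complementary rail $\bar u_j = \bar s_{j - 1} \vee \bar D$ for $1 \leq j \leq n - 1$, each manifestly monotone in $(s, \bar s, \rho, \bar \rho)$; composing with the decoder $V_{i_2}$ recovers $(v_{i_1} + 1) \bmod (n - i)$, and Proposition~\ref{extendprop} then secures cooperativity of the resulting partial map from $X_{i_1} \cup R$ to $X_{i_2}$.

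To conclude that this map is an $L$-function in the sense of Section~\ref{Boolsec}, I would evaluate the $O(n)$ conjunctions appearing in $D$ and $\bar D$ in parallel at constant depth, combine them via balanced binary OR-trees of depth $\lceil \log n \rceil$, distribute $D$ and $\bar D$ to the $n - 1$ output positions through bi-quadratic fan-out trees of the same depth, and finish with one binary AND or OR per output bit. The total cost is $\tau_1 = O(\log n)$ depth and $O(n)$ auxiliary variables, well within any reasonable budget. The only genuinely non-routine step is the bi-quadratic fan-out restriction, since $D$ must reach $\Theta(n)$ distinct output positions and cannot be shared directly; this is precisely why the explicit fan-out trees are needed, and once that bookkeeping is carried out, every regulatory function is a binary AND or OR of two inputs and the whole system is cooperative of the required depth.
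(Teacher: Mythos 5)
There is a genuine gap: you have silently replaced the paper's encoding of the integers by a unary (thermometer, dual-rail) encoding, and that encoding is inadmissible under the definitions that the lemma must satisfy. The function $F_1$ is not free-standing; its inputs are $(s_{X_{i_1}}, s_R)$ where, by Definition~\ref{MMdef} and~(\ref{Xisize}), $|X_{i_1}| = (1+\eps)\log(n)$, by~(\ref{Rsize}) $|R| = O(\log\log n)$, and the codes are the pairwise incomparable balanced-block vectors of Section~\ref{codingsec}; the output must again be a coding vector in $2^{X_{i_2}}$ with $|X_{i_2}| = (1+\eps)\log(n)$. A thermometer code for integers up to $n$ needs $\Theta(n)$ bits on each of $X_{i_1}$, $X_{i_2}$ and $R$, so your circuit computes a different function on a different (exponentially larger) domain. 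Moreover, Definition~\ref{Ldef} requires the witnessing circuit to have at most $\gamma(\log n)^2$ variables, whereas you concede $O(n)$ auxiliary variables ("well within any reasonable budget" is exactly what it is not). This is not a bookkeeping issue: the internal variables of $B_1$ are charged to $Y$ and must satisfy~(\ref{Yest}), and the whole chaos argument of Lemma~\ref{keylem} hinges on $N = \Theta(\log^2 n)$ so that the LCM lower bound $2^{\beta\log^2(n) - o(\log^2 n)}$ beats $c^N$. With $\Theta(n)$-bit registers the dimension becomes polynomial in $n$, the attractor length $\approx 2^{O(\log^2 n)}$ is subexponential in $N$, and $p$-$c$-chaos is lost for every $c>1$. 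So the compact coding is forced, and a proof of the lemma has to produce an increment-mod-$(n-i)$ circuit of depth $O(\log n)$ with $O(\log^2 n)$ variables operating directly on those short codes.

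Within your own encoding the combinational logic is fine, and the dual-rail idea is a reasonable way to buy monotonicity (the paper instead gets incomparability, hence cooperativity via Proposition~\ref{extendprop}, from the balanced blocks $C_k$). The paper's actual route, which you would need to adopt in some form, is a positional base-$K$ representation: the $(1+\eps)\log(n)$-bit code is split into $\ell = O(\log n)$ blocks of constant size $k$, each block coding a digit in $\{0,\dots,K-1\}$ with $K = 2^{k/(1+\eps)}$. Increment is then carry propagation: constant-size cooperative circuits (Corollary~\ref{techlem}) detect the digit $K-1$, add one to a digit, compare digits, and reset digits, while the global carry/equality information is aggregated and redistributed by the logarithmic-depth fan-out and long-AND/OR circuits of Propositions~\ref{duplprop} and~\ref{longconprop}; a final comparison with the counter value detects the code of $n-i$ and resets to zero. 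That yields depth $O(\log n)$ and $O(\log^2 n)$ variables, which is what the definition of an $L$-function demands. Your proposal, as written, cannot be repaired by adding fan-out trees; it needs the logarithmic-size positional encoding and the carry-propagation architecture.
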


We will need to assure that at time~$t_0$ the system will reach a state where $s_{R}(t) = b(t)$ for all times~$t \geq t_0$ and the $b(t)$'s are specified Boolean vectors that make the construction work.
This cannot be achieved for all possible initial conditions, only with probability arbitrarily close to one.

In Section~\ref{Boolsec} we will also define a notion of \emph{$L$-sequences.} Essentially, these are periodic sequences of Boolean vectors,   for which there are cooperative bi-quadratic Boolean input-output systems $B$ that return this sequence with probability arbitrarily close to one, for all sufficiently large times.  As for $L$-functions, there are restrictions on the depth and number of internal variables in~$B$ so that we can incorporate~$B$ into an $MM$-system.
Moreover, we require that  the vector of internal variables of~$B$ will eventually assume a sequence of specified states~$h^*(t)$, for all times $t \geq t_0$, also with probability arbitrarily close to one.

\begin{lemma}\label{countlem0}
Any specified sequence $(b(t): \, t \in \NN)$  of values of $s_R$ such that $b(t + |I|) = b(t)$ for all $t \in \NN$ is an $L$-sequence.
\end{lemma}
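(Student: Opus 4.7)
The plan is to realize the specified periodic $b(t)$ as the output of a cooperative bi-quadratic input--output system $B$ whose output matches $b(t)$ with probability arbitrarily close to one for all $t \geq t_0$. First I would carry out two reductions. Since $L$-sequences should compose coordinate-wise (running parallel subsystems on disjoint internal variables), it suffices to handle one coordinate of $b(t)$ at a time, reducing to single-bit periodic sequences of period $|I|$. Then any such sequence decomposes as a disjunction of ``elementary pulses'' $e_k(t)$ with $e_k(t)=1$ iff $t \equiv k \pmod{|I|}$, and closure of $L$-sequences under OR (a cooperative bi-quadratic operation implemented by a balanced binary OR-tree) and under COPY-chain delays further reduces the problem to constructing a single elementary pulse, say $e_0$.

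For $e_0$, the natural core is a cyclic shift register of $|I|$ internal variables $w_0, \ldots, w_{|I|-1}$ with the cooperative COPY dynamics $w_i(t+1) = w_{(i+1) \bmod |I|}(t)$. If this register ever reaches a one-hot configuration with the unique $1$ at position $0$ at a time $\equiv 0 \pmod{|I|}$, subsequent rotations yield $w_0(t) = e_0(t)$ for all later $t$, and the internal state of the register itself cycles through a specified sequence $h^*(t)$. The main technical task is to drive the register into this one-hot configuration with probability arbitrarily close to one from a random initial state.

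To do so, I would augment the shift-register update with gating signals $\alpha_i(t)$ and $\beta_i(t)$ coming from auxiliary aggregators built on many independent initial bits: an OR-tree over $K$ independent random bits is $1$ with probability $1 - 2^{-K}$, while an AND-tree over $K$ independent random bits is $0$ with probability $1 - 2^{-K}$. Using such signals one can both inject a $1$ at the designated position via an OR-gating and suppress $1$s elsewhere via an AND-gating, so that after a bounded transient the register settles into the target one-hot state with probability arbitrarily close to one. The aggregators must be realized as balanced binary trees of $O(\log K)$ depth, so that the construction respects the bi-quadratic fan-in/out restriction and the depth bound in the definition of $L$-sequence; the total number of auxiliary variables is polynomial in $K$ and $|I|$, well within whatever size restriction Section~\ref{Boolsec} imposes.

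The main obstacle, and the technical heart of the argument, is the cooperative (monotone) constraint: producing a \emph{reliably zero} signal at specific positions at specific times without the use of negation requires combining many independent random bits through AND-trees whose outputs must then be routed without ever referring to an already-existing clock. The delicate part is orchestrating the injection and suppression signals so that each position of the shift register is correctly set without any circular dependence, and verifying that after the transient time $t_0$ the composite probability of any failure (a spurious $1$ surviving, or the seed $1$ missing) can be driven below any prescribed $\varepsilon$ by taking $K$ sufficiently large. Once this is done, the lemma follows by assembling the elementary-pulse constructions via the OR and delay reductions above.
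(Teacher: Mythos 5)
Your overall architecture (a cyclic shift register of internal variables holding the periodic pattern, initialized with high probability from a random start) matches the paper's, but the crucial step is missing rather than merely delicate. What the construction actually needs is not a signal that is $1$ (or $0$) with high probability, but a signal that, with high probability, takes a prescribed value at \emph{one specific time} and then the opposite, neutral value at \emph{all later times} --- a one-shot transition that can force each register bit to its target value exactly once and then stop interfering. An OR-tree or AND-tree over $K$ independent random initial bits, as in your proposal, produces (with high probability) an essentially \emph{constant} signal; gating the register with it would either permanently clear the positions you AND with it or permanently inject $1$s at the positions you OR with it. You flag the orchestration of these signals as ``the technical heart,'' but you give no mechanism for making them switch at a known time without an already-existing clock, and that is precisely the content of the lemma. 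The paper's proof (Lemma~\ref{countlem+}) supplies this mechanism: the lowest level has the self-feedback update $s_{h_l(i)}(t+1)=s_{h_l(i)}(t)\wedge s_{h^*(i)}(t)$, so the number of zeros $S(t)$ is nondecreasing, and a central limit theorem estimate shows that with probability $>q$ the binomial counts satisfy $S(0)<9K/16$ and $S(1)>11K/16$; an AKS sorting network of depth $O(\log K)$ then converts this threshold crossing into positional signals that are $1$ at time $d$ and $0$ at all times $\geq d+1$, which is exactly the required pulse.

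A second, related gap concerns clause (e)/the $h^*(t)$ requirement in Definition~\ref{Ldefstat}: the \emph{hidden} variables must, with probability $>q$, follow a fixed specified sequence from some time on (this is what feeds the $p$-coalescence argument). In your scheme the auxiliary random bits driving the OR/AND aggregators would retain their random values indefinitely unless you drive them somewhere, so the hidden state never becomes determined. In the paper the same AND self-feedback that creates the pulse also sends the entire lower part of the system to the absorbing all-zero state, so $s_H(2d+1)=s^*_H$ holds with high probability; your proposal would need an analogous absorbing mechanism, and none is described.
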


We also need to assure that the function $F_1$ of Lemma~\ref{f1lem0} receives input vectors~$s_{X_i}$ that do code for integers.
Unfortunately, with probability close to~1, the vectors~$s_{X_i}(0)$ in the initial state will \emph{not} be coding any integers. In fact, our Lemma~\ref{crudepr} shows  that
with probability arbitrarily close to~1 as $N \rightarrow \infty$, each of the vectors~$s_{X_i}(0)$ will be belong to a set of \emph{crude vectors} that we will rigorously define in Section~\ref{codingsec}.

All crude vectors are incomparable with all coding vectors in~$2^{X_i}$. The
latter property turns out to be actually quite useful for our purposes.  It allows us to incorporate a Boolean circuit into our definition of~$f$ so that it takes as inputs $(s_{X_{i_0}}, s_{R})$ and writes an identical copy of the input $s_{X_{i_0}}$ to
$s_{X_{i_1}}$ if the input consists of a pair of coding vectors, but outputs a fixed coding vector if at least one of the input vectors is crude.
The circuit takes $\tau_3$ time steps for its calculations, and we let $i_0 = i_1 + \tau_3$.  Formally:

\begin{lemma}\label{f2lem0}
There exists an $L$-function
$F_3$ on inputs  $(X_{i_0}, R)$ that writes its output to $X_{i_1}$ and returns a specified coding vector $x^*$  on all inputs with the property that $s_{X_{i_0}}$ or  $s_{R}$ is crude and that returns an identical copy of $s_{X_{i_0}}$  on all  pairs of input vectors that code integers.
\end{lemma}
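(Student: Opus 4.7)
My plan is to define $F_3$ first as a partial Boolean function on a carefully chosen domain where cooperativity comes for free by incomparability, then invoke Proposition~\ref{extendprop} to extend it to a total cooperative function, and finally exhibit a bi-quadratic cooperative implementation of small depth that meets the $L$-function requirements. Concretely, let $D \subseteq 2^{X_{i_0}} \times 2^{R}$ consist of all pairs $(s,r)$ such that either both $s$ and $r$ are coding vectors, or at least one of $s$, $r$ is crude. On the coding-coding part of $D$ set $F_3(s,r)=s$; on the rest of $D$ set $F_3(s,r)=x^*$.

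For cooperativity on $D$, note that within the coding-coding stratum the map is the identity on $s$, which is order-preserving, and within the crude-containing stratum it is constant. Only comparability between the two strata requires attention. Since crude vectors in $2^{X_{i_0}}$ are incomparable with coding vectors in $2^{X_{i_0}}$ (and similarly for $R$), an inequality $(s,r)\le(s',r')$ in which $(s,r)$ lies in one stratum and $(s',r')$ in the other would force a comparability between a crude and a coding vector in one of the coordinates, which is impossible. Hence only comparable pairs lie in the same stratum, so $F_3$ is cooperative on $D$; Proposition~\ref{extendprop}(b) then extends it to a total cooperative function on $2^{X_{i_0}}\times 2^R$.

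To witness that the extended function is actually an $L$-function, I would build the following explicit circuit. Compute in parallel a pair of cooperative ``crudeness detectors'' $a_X(s)$ and $a_R(r)$ that output $1$ whenever the corresponding vector is crude and $0$ whenever it is a coding vector; by the structural definition of crude vectors (to be given in Section~\ref{codingsec}), each such detector can be written as a shallow disjunction of small conjunctions of input bits, hence realized by a bi-quadratic cooperative subcircuit. Let $a = a_X \vee a_R$. For each output coordinate $j$ of $X_{i_1}$, if the corresponding bit of $x^*$ equals $1$ put $(F_3)_j = s_{X_{i_0},j}\vee a$, while if it equals $0$ put $(F_3)_j = s_{X_{i_0},j}\wedge a'$, where $a'$ is produced by the analogous ``non-crudeness'' construction on the cooperative partial domain; again cooperativity of $a'$ on the relevant domain is guaranteed by Proposition~\ref{extendprop}. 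Every gate used is COPY, binary AND, or binary OR, so the whole circuit is cooperative and bi-quadratic.

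The main obstacle I expect is bookkeeping for the bi-quadratic constraint rather than any conceptual difficulty: the single detector bit $a$ naturally wants to fan out to every output coordinate of $X_{i_1}$, which violates the out-degree bound. I would handle this by replacing the fan-out with a binary COPY tree, which costs only logarithmic depth and $O(|X_{i_0}|+|R|)$ additional variables, and then check that these overheads stay within the budget that the formal definition of an $L$-function in Section~\ref{Boolsec} allows; since $\tau_3$ is required only to be $\ll |I|$ and $|I|$ grows with $N$, such a logarithmic depth bound is comfortably affordable.
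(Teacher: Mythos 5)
Your proposal is correct and follows essentially the same route as the paper: your crudeness detector $a$ (a disjunction of blockwise conjunctions) and its dual $a'$ (a conjunction of blockwise disjunctions) are exactly the paper's $ONE$ and $ZERO$ functions, and your coordinatewise rule $s_j\vee a$ on coordinates where $x^*$ is $1$ and $s_j\wedge a'$ where $x^*$ is $0$ is precisely the paper's defining equation for $F_3$, with the fan-out and depth bookkeeping handled the same way (copy trees and the long AND/OR circuits). The only cosmetic difference is that the paper applies the detectors to the concatenated input vector rather than to $s_{X_{i_0}}$ and $s_R$ separately, which is equivalent.
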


Now let $0 < q_1, q_2 < 1$ be probabilities such that
$q_1 + q_2 - 1 > \sqrt{p}$, that is, such that if events $E, F$ occur with probabilities $> q_1$ and  $> q_2$ respectively, then event $E \cap F$ occurs with probability $> \sqrt{p}$. Let~$N$ be sufficiently large so that with probability $> q_1$ the Boolean input-output system $B_2$ that returns the sequence of $b(t)$'s for all times~$t \geq \tau_2$, where $\tau_2 \ll |I|$, behaves as desired and with probability $> q_2$ all initial vectors $s_{X_i}(0)$ are crude.
We will construct the updating function~$f$ so that it is cooperative, bi-quadratic, computes the functions~$F_1, F_3$ in the sense of~(\ref{fgXY}) with inputs and outputs as specified by Lemmas~\ref{f1lem0} and~\ref{f2lem0}, with appropriate values of~$b(t)$ and, with probability $> q_1$, writes these values~$b(t)$ to
$s_R(t)$ for all times $t \geq \tau_2$.
The formal proof of Lemma~\ref{res1lem} in the remainder of this paper shows that it is possible to construct such~$f: 2^N \rightarrow 2^N$ for all sufficiently large dimensions~$N$ in such a way that~$|I|$ exceeds the combined number of time steps of all necessary computations.

With probability $> \sqrt{p}$, this presents us with the following situation after $t_2 := \max\{\tau_2, \tau_3\}$ time steps: The inputs $s_{X_{i_1}}, s_R$ of~$F_1$ are pairs of codes for  integers as needed
for the computation of $v_{i} + 1$ modulo $n  - i$, since $s_R = b(t_2)$ is the desired value of the counter and $s_{X_{i_1}}$ is the value of $F_3$ for an input with a crude coordinate~$X_{i_0}(0)$.  The analogous property  remains true at  subsequent time steps.  Now let us move the system forward to time $t_1 := t_2 + \tau_1$.  At this time, the vectors $s_{X_{i_2}}, s_{X_{i_2-1}}, \dots , s_{X_{i_2- t_1 + 1}}$ will be copies of output
of $f$ for values of the input vectors on $X_{i_1}, R$ that with high probability are not pairs of codes for integers.  We don't have
much control over these vectors; it is not at all clear whether they are coding or crude.    So we cannot automatically
assume that they will be turned into coding vectors once they will have migrated to position $i_0$.  This is were $Q$ comes in:
We let $Q = \{i_2, i_2-1, \dots , i_2- t_1 +1\}$. One may consider~$Q$ a set of ``possibly corrupted memory locations.''  Our formulation
of~(\ref{modaddI0}) allows us to disregard these memory locations. We will tag these locations by specifying a fixed crude vector~$b(t)$ for all those times~$t$ such that~$F_3$ takes an input vector~$s_{X_{i_0}}$ at time~$t$ which is a copy of some $s_{X_i}(t_1 + m|I|)$ with
$i \in Q$, where $m$ is a nonnegative integer.  The crude input coordinate at these times~$t$ will ensure that~$F_3$ writes the specified coding vector
$x^*$ to $X_{i_1}$ at the corresponding times~$t + \tau_3$.

Now consider $t_0 = t_1 + |I|$.  With probability~$> \sqrt{p}$ the following holds:  All the vectors $s_{X_i}(t_0)$ with $i \notin Q$ will be coding.  For all $t \geq t_0 > t_1$, by Lemma~\ref{countlem0}, the vectors~$s_{R}(t)$ can be required to hold codes $b(t)$ for integers so that the function~$F_1$
will add~1 $mod \ (n - t^*)$ to $v_{i_1}(t)$ and write the output to $X_{i_2}$ at time $t + \tau_1$,
where $t^* = t \ mod \ |I|$, unless $s_{X_{i_1}}(t)$ is a copy of some $s_{X_i}(t_1 + m|I|)$ with $i \in Q$.  The function $F_3$ will simply copy $s_{X_{i_0}}(t)$ to
$s_{X_{i_1}}(t+ \tau_3)$ unless the associated $b(t)$ indicates that $s_{X_{i_0}}(t)$ originates from some $s_{X_i}(t_1 + m|I|)$ with $i \in Q$.  Thus
for $t = t_0$ property~(\ref{modaddI0}) will hold, and by induction, the same will be true for all times~$t$ of the form~$t = t_0 + k|I|$ for some
nonnegative integer~$k$.
Since $q_1 + q_2 - 1 \geq p$, the latter implies that $\bB$ thus constructed can be part of an $MM$-system, and it follows from Lemma~\ref{keylem} that~$\bB$ is $p$-$c$-chaotic.

A slight technical difficulty arises from the need for $F_1$ and $F_3$ to receive the signal of a corrupted memory location given by a crude~$b(t)$ at different times; we will show in Section~\ref{BoolLsec} how to implement the necessary phase shift.

For the proof of $p$-coalescence, we will need in addition that with probability~$> \sqrt{p}$ at  time
$t_0$ all variables in the system will assume values  specified by a fixed  vector~$\vs^{\, +}$.  The argument for this works for randomly chosen initial states with probability $> q_1 + q_2 - 1 > \sqrt{p}$ and
goes as follows.  Our set of variables
is $X \cup Y$, where $Y$ comprises the internal variables of the Boolean input-output systems $B_1, B_3$ that implement~$F_1, F_3$, and $B_2$ that returns the sequence of $b(t)$'s, plus some dummy variables.   We may choose $\tau_2$ such that already starting from time $t_2 < t_1$, with probability $> q_1$ the vector of all internal variables of~$B_2$ will assume a fixed sequence of values~$h^*(t)$ for all times $t \geq t_0$ by our definition of an $L$-sequence.  With probability~$> q_2$, all initial values $s_{X_i}$ will be crude; as long as $F_3$ receives an unaltered copy of these as its first input, it will turn it into the specified coding vector~$x^*$.  Similarly, the vectors $s_{X_i}(t_1)$ for $i \in Q$
will be copied to inputs of $F_3$ and converted by $F_3$ into the value $x^*$ after $\tau_3$ time steps.  All these operations will be completed before time~$t_0$.  Thus at time~$t_0$, all vectors $s_{X_i}$ will have values specified by the fixed sequence of $b(t)$'s and the output of~$F_1$ on input
vectors $(x^*, b(t))$.  Moreover, the states of the internal variables of the Boolean circuit~$B_1$ at time~$t_0$ will be determined by the sequence of the inputs between times $t_0 - \tau_1$ and $t_0 - 1$, which is fixed.  A similar observation holds for the the states of the internal variables of the Boolean circuit~$B_3$ at time~$t_0$. The value of any other variable at time~$t$ will simply be a copy of the value of some output variable of~$B_2$ or of some variable in~$X$ at time~$t - \tau$, where $\tau < \tau_3$.  So we will get coalescence on those variables as well, and for a proportion of~$> \sqrt{p}$ of initial states, $\vs(t_0)$ will be the same state~$\vs^{\, +}$.

\section{$MM$-systems}\label{MMsec}

Let us begin by formally stating the definitions of an $M$-system and an $MM$-system that were  described somewhat informally in the previous section.

\begin{definition}\label{Mdef}
An $N$-dimensional \emph{$M$-system} will be a mathematical object
$\cM = (\bB, p, I, Q, n, \cX, Y, \cV, t_0)$ such that

\begin{itemize}
\item $\bB$ is an $N$-dimensional Boolean system.
\item $p$ is a probability such that $0 < p < 1$.
\item $I = \{0, 1, \ldots , |I|-1\}$ is a set of consecutive nonnegative integers.
\item $Q \subset I$.
\item $n$ is a positive integer such that $\log(n)$ is an integer.
\item $\cX = \{X_i: \ i \in I\}$ is a family of pairwise disjoint subsets of the set $[N]$ of Boolean variables of $\bB$.
\item $Y = [N] \backslash \bigcup \cX$.
\item $\cV = \{V_i: \ i \in I\}$ is a family of partial functions such that $V_i$ maps a subset of $2^{X_i}$ onto $\{0, \ldots , n-1\}$.
\item $t_0$ is a positive integer.
\item If $s_{X_i}$ is in the domain of $V_i$, then $V_i(x_i)$ will be denoted by $v_i$.
\item For a proportion of $> p$ of all initial conditions the following will hold for all $t$ of the form $t =  t_0 + k|I|$, where $k$ is a nonnegative integer and $t_0$ is a fixed time, and all $i \in I \backslash Q$:

\smallskip

    $s_{X_i}$ is in the domain of~$V_i$ and

\begin{equation}\label{modaddI}
v_i(t+ |I|) = v_i(t) + 1 \ mod (n - i + 1).
\end{equation}

\end{itemize}
\end{definition}

As mentioned in the previous section, an $MM$-system is  an $M$-system with certain additional parameters and size restrictions
that will imply~$p$-$c$-chaos in~$\bB$.

\begin{definition}\label{MMdef}
An~$N$-dimensional \emph{$MM$-system} is a mathematical object
$\cM = (\cM^-, c, \eps, \delta, \beta, \nu)$ with the following properties:

\begin{itemize}
\item $\cM^- = (\bB, p, I, Q, n, \cX, Y, \cV, t_0)$ is an~$N$-dimensional $M$-system.
\item $I = \{0, 1, \ldots , \beta \log(n)-1\}$.
\item $c, \eps, \delta, \beta, \nu$ are constants such that $1 < 2 < c$; $\beta, (1 + \eps)\log(n)$ are positive integers, $\eps, \delta > 0$, $\beta > \nu \geq 0$, and
\begin{equation}\label{logest}
\log (c)(1 + \eps + \delta) < 1.
\end{equation}

\begin{equation}\label{Xisize}
\forall i \in I \ |X_i| = (1 + \eps)\log(n).
\end{equation}

\begin{equation}\label{Yest}
|Y| \leq \left(\beta\delta - \frac{\nu}{\log c}\right)\log^2(n).
\end{equation}

\begin{equation}\label{Qest}
|Q| \leq \nu \log(n).
\end{equation}

\end{itemize}
\end{definition}

\bigskip

Assume that $\cM$ is an $N$-dimensional $MM$-system.  By~(\ref{Xisize}) and the choice of~$I$ we have

\begin{equation}\label{sizeX}
|X| = \beta(1 + \eps) \log^2 (n).
\end{equation}

Moreover, by~(\ref{sizeX}) and~(\ref{Yest}) we will have

\begin{equation}\label{sizeN}
 \beta(1 + \eps) \log^2 (n) \leq N \leq \left(\beta(1 + \eps + \delta) - \frac{\nu}{\log c}\right) \log^2(n).
\end{equation}

Note also that by our choice of~$I$, equation~(\ref{modaddI}) becomes

\begin{equation}\label{modadd}
v_i(t+ \beta \log(n)) = v_i(t) + 1 \ mod (n - i + 1).
\end{equation}

It will often be more convenient to use~(\ref{modaddI}) written in the form~(\ref{modadd}).

\begin{lemma}\label{keylem}
Let $p, c$ be  constants such that $0 <p < 1 < c < 2$.  Then  for all sufficiently large~$N$ and every
$N$-dimensional $MM$-system $\cM(\bB, p, c)$, the Boolean system~$\bB$ is $p$-$c$-chaotic.
\end{lemma}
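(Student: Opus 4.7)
The plan is: for each of the $>p$-fraction of initial conditions at which (\ref{modaddI}) holds along the trajectory, show that the attractor it enters has length at least
\[
M := \mathrm{lcm}\{n - i + 1 : i \in I \setminus Q\},
\]
and then establish $M > c^N$ for all sufficiently large $n$, using that $\{n - i + 1 : i \in I\}$ is a block of $\beta\log n$ consecutive integers just below $n$, combined with the size constraint~(\ref{logest}).

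For the attractor-length bound, fix such a good initial condition $\vs(0)$ and let $L$ be the length of the attractor its trajectory enters. For all sufficiently large $t$, $\vs(t + L) = \vs(t)$, so pick such a $t$ of the form $t = t_0 + k|I|$. Set $T := \mathrm{lcm}(L, |I|)$, which is again a period of the orbit; writing $T = m|I|$ gives $m = L/\gcd(L, |I|)$. Iterating (\ref{modaddI}) $m$ times starting from $t$ yields
\[
v_i(t + T) \equiv v_i(t) + m \pmod{n - i + 1} \quad \text{for every } i \in I \setminus Q,
\]
and periodicity of $\vs$ forces $(n - i + 1) \mid m$ for each such $i$. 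Hence $m \geq M$, and since $L = m \cdot \gcd(L, |I|) \geq m$, we obtain $L \geq M$.

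To lower-bound $M$, note that $\{n - i + 1 : i \in I\}$ is the consecutive block $[n - \beta\log n + 2, n+1]$ of size $\beta\log n$. I would invoke the classical divisibility $\binom{u+v}{v} \mid \mathrm{lcm}(u+1, u+2, \ldots, u+v)$, proved prime-by-prime: by Legendre's formula, $v_p\bigl(\binom{u+v}{v}\bigr) = \sum_{j \geq 1} a_j$ with each $a_j \in \{0,1\}$, and a short case analysis shows $a_j = 0$ whenever no element of $\{u+1,\ldots,u+v\}$ is divisible by $p^j$, so $\sum_j a_j \leq \max_i v_p(u+i) = v_p(\mathrm{lcm})$. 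Applying this with $u = n + 1 - \beta\log n$ and $v = \beta\log n$ yields $\mathrm{lcm}\{n - i + 1 : i \in I\} \geq \binom{n+1}{\beta\log n}$. Since $|Q| \leq \nu\log n$ by~(\ref{Qest}) and each removed modulus is at most $n+1$, dropping these indices costs at most a factor $(n+1)^{\nu\log n}$, giving
\[
M \geq \binom{n+1}{\beta\log n} \Big/ (n+1)^{\nu\log n}.
\]

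Finally, using $\binom{n+1}{\beta\log n} \geq \bigl((n+1)/(\beta\log n)\bigr)^{\beta\log n}$ and taking logarithms,
\[
\log M \geq \beta\log n \cdot \log\frac{n+1}{\beta\log n} - \nu\log n \cdot \log(n+1) = (\beta - \nu - o(1))\log^2 n
\]
as $n \to \infty$. On the other hand, by~(\ref{sizeN}), $N\log c \leq (\beta(1+\eps+\delta)\log c - \nu)\log^2 n$, and condition~(\ref{logest}) gives $(1+\eps+\delta)\log c < 1$, so this upper bound is strictly less than $(\beta - \nu)\log^2 n$. Therefore $\log M > N\log c$ for all sufficiently large $n$, i.e.\ $L \geq M > c^N$, proving $\bB$ is $p$-$c$-chaotic. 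The main technical obstacle I anticipate is justifying the binomial-coefficient divisibility, since the naive lower bound $M \gtrsim n$ (a single modulus) is exponentially too weak — the needed $\log M \gtrsim \beta\log^2 n$ really does require the block structure of the moduli and careful bookkeeping of the loss from $Q$.
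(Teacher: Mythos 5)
Your proposal is correct, and its overall architecture coincides with the paper's: restrict to the $>p$ fraction of good initial conditions, show the attractor length is at least the least common multiple of the moduli $n-i+1$ over $i\in I\setminus Q$, pay a factor of at most $(n+1)^{|Q|}\le (n+1)^{\nu\log n}$ for the corrupted indices, and then beat $c^N$ by comparing exponents via~(\ref{sizeN}) and~(\ref{logest}), exactly as in the paper's chain (\ref{attest})--(\ref{cnest}). The differences are in the two ingredients. For the attractor-length step you give a cleaner, fully explicit argument (iterating~(\ref{modaddI}) along the period $T=\mathrm{lcm}(L,|I|)$ and concluding $(n-i+1)\mid L/\gcd(L,|I|)$), where the paper argues more informally that states at times differing by less than $n^{-|Q|}\mathrm{LCM}(\cdot)$ must be distinct; both yield the same bound. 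For the LCM of the block of $\beta\log n$ consecutive integers, the paper simply cites Farhi's theorem ($\mathrm{LCM}\ge (n-k)^k/k!$) and applies Stirling, whereas you prove from scratch the divisibility $\binom{u+v}{v}\mid \mathrm{lcm}(u+1,\dots,u+v)$ via Legendre's formula (your carry/valuation argument is sound: each index $j$ with $a_j=1$ forces a multiple of $p^j$ in the block, so $v_p(\binom{u+v}{v})\le v_p(\mathrm{lcm})$), obtaining $\mathrm{lcm}\ge\binom{n+1}{\beta\log n}\ge((n+1)/\beta\log n)^{\beta\log n}$. Your route is self-contained and in fact recovers the essence of Farhi-type bounds; the paper's route is shorter by outsourcing the number theory. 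Both give $\log|A|\ge(\beta-\nu-o(1))\log^2 n$ against $N\log c\le(\beta(1+\eps+\delta)\log c-\nu)\log^2 n$, so the conclusion is the same.
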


\begin{proof}
Let $\cM(\bB, p, c)$ be an $N$-dimensional $MM$-system with $N$ (and hence~$n$) sufficiently large.

Let $\vs(0)$ be a randomly chosen initial condition and let $A$ be the attractor that the trajectory of~$\vs(0)$ eventually reaches.
With probability~$> p$, condition~(\ref{modadd}) will hold for all~$t$ of the form~$t = t_0 + k|I|$ such that $\vs(t) \in A$. If this is the case, then we have $\vs(t_1\beta\log(n)) \neq \vs(t_2\beta\log(n))$ whenever $\vs(t_1\beta\log(n)) \in A$ and  $t_1 < t_2$ are such that
$|t_2 - t_1| < n^{-|Q|}LCM(n, n-1, \dots , n - \beta log(n) + 1)$.  Thus

\begin{equation}\label{attest}
|A| \geq \frac{LCM(n, n-1, \dots , n - \beta log(n) + 1)}{n^{\nu\log(n)}}.
\end{equation}

It is known that
\[ {\rm LCM}(n(n-1)\ldots (n-k)) \geq \frac{(n-k)^k}{k!} \]
for any $n>k \in \NN$ (see Theorem~4 of \cite{Fahri}). By Stirling's formula,

\[ \frac{(n-k)^k}{k!} \approx \frac{(n-k)^ke^k\sqrt{2\pi k}}{k^k} \geq \frac{(n-k)^k}{k^k}\frac{e^k\sqrt{2\pi k}}{k^k}
\geq \frac{n^k}{k^{2k}} \geq 2^{k\log(n)-2k\log(k)}, \]

so, for $k=\beta \log(n)$, we have
\begin{equation}\label{LCMest}
LCM(n(n-1) \ldots (n-\beta\log(n))) \geq 2^{\beta \log^2(n)-2\beta \log(n)\log(\beta\log(n))}.
\end{equation}

From~(\ref{attest}) and~(\ref{LCMest}) we get

\begin{equation}\label{Aest}
|A| \geq 2^{(\beta - \nu) \log^2(n)-2\beta \log(n)\log(\beta\log(n))}.
\end{equation}

On the other hand, by~(\ref{sizeN}) we have

\begin{equation}\label{cnest}
c^N <  2^{(\log (c)(1 + \eps + \delta)\beta  - \nu) \log^2(n)}.
\end{equation}

For fixed $\beta, \nu$, the term
$2\beta\log(n)\log(\beta\log(n))$ becomes negligible as long as $n$ is sufficiently large. Thus, by~(\ref{logest})
the right-hand side of~(\ref{Aest}) will exceed the the right-hand side of~(\ref{cnest}), and Lemma~\ref{keylem} follows.
\end{proof}

The following lemma implies  Theorem~\ref{mainth}.  In view of Lemma~\ref{keylem}, its first part implies $p$-$c$-chaos, and its last sentence implies $p$-coalescence.

\begin{lemma}\label{res1lem}
Let $p, c$ be constants such that $0 < p < 1 < c < 2$.  Then there exists  a positive integer $N_{p,c}$ such that for all $N > N_{p,c}$ there exists an $N$-dimensional $MM$-system
$\cM(\bB, p, c, t_0)$ such that $\bB$ is bi-quadratic and cooperative.
Moreover, this system can be constructed in such a way that there exists a state $\vs^{\, +}$ so that for a randomly chosen initial condition $\vs(0)$ we will have $\vs(t_0) = \vs^{\, +}$ with probability $> \sqrt{p}$.
\end{lemma}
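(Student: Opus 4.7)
My plan follows the outline already given in Section~\ref{description}, essentially reducing the lemma to assembling the three modules supplied by Lemmas~\ref{f1lem0}, \ref{countlem0}, and~\ref{f2lem0}, and then bookkeeping sizes. First I would fix auxiliary constants $\eps,\delta,\beta,\nu>0$ so that (\ref{logest}) holds with enough slack that $(\beta\delta-\nu/\log c)\log^2(n)$ exceeds the total count of internal variables of the three modules, which, by the depth/size bounds folded into the (forthcoming) definitions of $L$-functions and $L$-sequences, should be $O(\log^2 n)$ with constants controllable by $\delta$. I would then fix probabilities $q_1,q_2\in(0,1)$ with $q_1+q_2-1>\sqrt{p}$, where $q_2$ is the probability that every initial block $s_{X_i}(0)$ is crude (by the asserted Lemma~\ref{crudepr}) and $q_1$ is the probability that the counter module writes $b(t)$ to $R$ and that its internal variables track a specified sequence $h^*(t)$ for all $t\ge\tau_2$; both probabilities tend to one as $N\to\infty$, so both events can be secured simultaneously for $N$ large. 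To cover every sufficiently large $N$ rather than only those of the form $\Theta(\log^2 n)$, I would pad $Y$ with dummy COPY chains not affecting the dynamics.

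Next I would assemble the network. The blocks $X_i$ of size $(1+\eps)\log(n)$ are indexed by $I=\{0,\ldots,\beta\log(n)-1\}$; for $i\notin\{i_1,i_2\}$ the regulatory functions are simple COPY implementing the circular tape $s_{X_i}(t+1):=s_{X_{i+1}}(t)$ (with the wrap-around $X_0\to X_{|I|-1}$). Choose $i_0,i_1,i_2\in I$ with $i_0-i_1=\tau_3$ and $i_1-i_2=\tau_1$, and install: the module $B_1$ from Lemma~\ref{f1lem0} reading $(X_{i_1},R)$ and writing to $X_{i_2}$; the module $B_3$ from Lemma~\ref{f2lem0} reading $(X_{i_0},R)$ and writing to $X_{i_1}$; and the counter $B_2$ from Lemma~\ref{countlem0} with outputs on $R$. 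Because $F_3$ and $F_1$ need the crude-memory signal at times offset by $\tau_3$, I would insert a COPY delay chain of length $\tau_3$ (absorbed into $Y$) and feed the two modules the two correspondingly shifted copies. Set $Q:=\{i_2,i_2-1,\ldots,i_2-t_1+1\}$, which has size $O(\log n)$, so (\ref{Qest}) can be met by choosing $\nu$ appropriately. Cooperativity and bi-quadraticity of $f$ follow because each module has those properties by hypothesis, the COPY tape trivially preserves them, and any partial regulatory function produced at an interface is extended to a total cooperative one by Proposition~\ref{extendprop}(b).

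With $t_2:=\max\{\tau_2,\tau_3\}$, $t_1:=t_2+\tau_1$, and $t_0:=t_1+|I|$, I would verify the MM-system axioms on the joint event of probability $>q_1+q_2-1>\sqrt{p}$ that the counter works and all initial $X_i$-blocks are crude. Under this event, for $t\ge t_2$ the module $B_3$ turns every crude $X_{i_0}$-input into the specified coding vector $x^*$, so at time $t_1$ each $X_i$ with $i\notin Q$ carries a coding vector; the rotating tape together with $B_1$, and the tagging of $Q$ via the crude values of $b(t)$, then gives (\ref{modadd}) at $t=t_0$ and, by induction, at every later time of the form $t_0+k|I|$, yielding $p$-$c$-chaos through Lemma~\ref{keylem}. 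For coalescence, on the same event the state at time $t_0$ is pinned down coordinate-by-coordinate by the fixed sequences $b(t)$ and $h^*(t)$, the fixed vector $x^*$, the deterministic outputs of $F_1$ on the fixed input window $[t_0-\tau_1,t_0-1]$, and COPY copies of such values; hence $\vs(t_0)=\vsp$ for a single fixed state $\vsp$ on an event of probability $>\sqrt{p}$.

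The main obstacle I anticipate is not the conceptual flow but the simultaneous bookkeeping of sizes and timing: one must verify that the internal-variable count of $B_1,B_2,B_3$, the delay chain, and the dummy padding fits within the bound $|Y|\le(\beta\delta-\nu/\log c)\log^2(n)$ of (\ref{Yest}), and that $|I|=\beta\log(n)$ is large enough to accommodate the entire computation without collisions between $F_1$'s output window at $X_{i_2}$, $F_3$'s output at $X_{i_1}$, and the corrupted region $Q$. This is entirely controlled by the depth/size guarantees of $L$-functions and $L$-sequences established in Section~\ref{Boolsec}; once those are in hand, taking $\beta$ sufficiently large and $\eps$ sufficiently small leaves the requisite room. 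A secondary subtlety is maintaining cooperativity at the interfaces between the modules and the rotating tape, where one must verify that the combined partial regulatory functions still satisfy the hypothesis of Proposition~\ref{extendprop}(a) or~(b), so that extending to a total cooperative function never alters the already specified outputs of any module.
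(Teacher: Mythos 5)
Your proposal is correct and follows essentially the same route as the paper's own proof: the same modular assembly of $B_1$, $B_2$, $B_3$ on a circular copy tape with a $\tau_3$-step delay chain for the counter signal, the same choice of $Q$, $t_2,t_1,t_0$, the same $q_1,q_2$ probability bookkeeping for $p$-$c$-chaos via Lemma~\ref{keylem} and for coalescence, and the same padding and size accounting via the $L$-function/$L$-sequence bounds and a sufficiently large $\beta$. The only differences are cosmetic (e.g.\ the paper wires dummy variables to copy from $X\setminus(X_{i_0}\cup X_{i_1})$ and checks out-degrees explicitly to secure bi-quadraticity, and fixes $\eps$ through $c$-friendliness rather than by taking it small), none of which affects the argument.
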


\section{Boolean input-output systems}\label{Boolsec}

A \emph{Boolean input-output system} is a hierarchical arrangement of binary variables,
with the input variables constituting the lowest level, the output variables the highest level and each  variable (except for the ones at the input level)
taking input from one or several variables at some level(s) other than the output level and updating their current state according to an AND, OR, NOT gate, or simply
copying its single input.

A \emph{Boolean circuit} is a Boolean input-output system in which all the variables (except for the ones at the input level)
take input from one or several variables at the next lower level.

The \emph{depth} $d(B)$ of a Boolean input-output system~$B$ is the number of its levels, not counting the lowest (input) level.
An \emph{internal variable} of a Boolean input-output system is a variable that is neither an input variable nor an output variable.
We will  use the notation $B(D, H, R)$ to indicate the sets of variables of a Boolean input-output system~$B$.  In this notation,
the sets $D, H, R$ are pairwise disjoint, the domain $D$ contains the input variables, the range $R$  denotes the set of output variables, and $H$ (for \emph{hidden}) denotes the
set of internal variables.
Notice that the depth of~$B(D, H, R)$ is~1 iff $H = \emp$.

We will call a Boolean input-output system~$B = B(D, H, R)$ \emph{cooperative} if it does not use NOT gates and  \emph{quadratic} if it uses only binary or unary gates.
 A quadratic system is \emph{bi-quadratic} if each variable in $D \cup H$ serves as input for at most two variables
in the system.
We say that $B$ is \emph{monic} if every gate takes only one input.  Note that this definition allows $B$ to be simultaneously monic and bi-quadratic.

Let $B(D, H, R)$ be a Boolean circuit of depth~$d$ and let $g: \subseteq 2^{D} \rightarrow 2^R$ be a (partial) Boolean function.
We say that  $B$
\emph{calculates~$g$ (in $d$ steps)} if for every input $\vs$ in the domain of~$g$
after $d$ updating steps the value of the output vector of~$B$ will be $g(\vs)$.

\begin{example}\label{copyex0}
Let $D, R$ be such that $|D| = |R|$ and let  $\chi : D \rightarrow R$ be a bijection.  Define $id: 2^{D} \rightarrow 2^R$ as the function that extends $\chi$ to binary vectors in the obvious way.
Then there exists a cooperative bi-quadratic Boolean circuit $B_{id} = B_{id}(D, \emp,  R)$ of depth~1 that calculates the function~$id$.
 \end{example}

We can construct $B_{id}$ in such a way that for $i \in D$ the value of $s_i(t)$ will simply be copied to $s_{\chi(i)}(t+1)$ by a monic regulatory function.   This makes $B_{id}$ cooperative, bi-quadratic, and monic.

Note that since in a Boolean circuit variables at each level take input only from the variables at the next lower level, the output of a Boolean
circuit of depth~$d$ (after~$d$ steps) will not be influenced by the initial state of its internal variables.  In contrast,  the output of a Boolean input-output system after $d$ time steps  may also depend on the initial states of its internal variables.

Let $B(D, H, R)$ be a Boolean input-output system of depth~$d$ and let $(s_R(t): \ t \in \NN)$ be a sequence of  Boolean vectors in~$2^R$.
We say that  $B$
\emph{returns $s_R(t)$ with probability $> q$ for all $t \geq d(B)$} if for a proportion of $> q$ of all initial states of the internal variables of~$B$ the output sequence will be as specified for all times $t \geq  d(B)$, for any sequence $(s_D(t): \, t \in \NN)$ of values of the input variables.  This makes the input variables redundant, but for some constructions in the follow-up paper~\cite{JMII} we will need to include nonempty sets~$D$ of input variables.  For this reason we give the definition here in its full generality rather than restricting it to the case~$D = \emptyset$, which is the only one needed here.

Boolean circuits $B_1(D_1, H_1, R_1)$ and $B_2(D_2, H_2, R_2)$ can be concatenated in the obvious way as long as $R_2 = D_1$.  The concatenation $B = B_1\circ B_2$ will have input variables
$D_2$, output variables $R_1$, internal variables $H = H_2 \cup R_2 \cup H_1$, and its depth will satisfy $d(B) = d(B_1) + d(B_2)$.  Moreover,
if $B_1, B_2$ are Boolean circuits that calculate $g_1, g_2$ respectively, then
$B$ will calculate $g_1 \circ g_2$.   One can also convert Boolean input-output systems into parts of an updating function~$f$ of a Boolean network by re-interpreting the logical gates as updating functions for the relevant variables.  The computations performed by the Boolean input-output systems then guarantee the corresponding computations of the updating function~$f$ in the sense of~(\ref{fgXY}).

Now we are ready to define $L$-functions and  $L$-sequences.  Since we are interested in constructing Boolean systems of all sufficiently large dimensions, strictly speaking, these objects are really families of functions or sequences that contain one representative for
each desired dimension~$N$.  In order to keep the terminology in our already rather technical proof reasonably manageable, we suppress reference to the whole family whenever this seems to make our arguments more transparent.  The size of the domains and ranges of individual $L$-functions ($L$-sequences) will be
controlled by the parameter~$n$ in the definition of an $MM$-system. In Section~\ref{BoolLsec}   we will define a set $\bM$ of \emph{suitable}~$n$, which is, a set of all $n \in \NN$ that could appear as parameter in at least one~$MM$-system of interest.  Our families of $L$-functions and $L$-sequences will be indexed by this set.

\begin{definition}\label{Ldef}
Let $\cF = (F^n: \ n \in \bM)$  be a family of functions with $F^n: D(n) \rightarrow R(n)$.
We say that $\cF$ consists of \emph{$L$-functions} if there exist $\gamma >0$ such that for every $n \in \bM$
there exists a cooperative bi-quadratic Boolean circuit with $\leq \gamma (log(n))^2$  variables of depth $\leq \gamma \log(n)$ that calculates
$F^n$.
\end{definition}

\begin{definition}\label{Ldefstat}
Let $\cS = (S(n): \ n \in \bM)$  be a family of sequences with $S(n) = (s^n_R(t): \ t \in \NN)$.
We say that $\cS$ consists of \emph{$L$-sequences} if for every given probability $q < 1$ there exists $\gamma >0$ such that for every $n \in \bM$
there exists a cooperative bi-quadratic Boolean input-output system $B = B(\emptyset, H, R)$ with $\leq \gamma (log(n))^2$  variables of depth $d \leq \gamma \log(n)$ that \emph{returns $s^n_R(t)$ with probability $> q$ for all $t \geq d(B)$}, and a fixed sequence of vectors $h^*(t) \in 2^H$  such that with probability~$> q$ the equality $s_H(t) = h^*(t)$ will hold for all $t \geq 2d(B)$.
\end{definition}

Note that certain size restrictions on~$D(n)$ and~$R(n)$ will be necessary to prove that a given sequence~$\cF$ is an~$L$-function.  Similarly, certain size restrictions on~$s^n_R(t)$ and the periods~$T_n$ will be needed to prove that a given~$\cS$ that consists of sequences~$s^n_R(t)$ with periods~$T_n$ is an $L$-sequence.  These size restrictions have not been spelled out explicitly in Section~\ref{description}; they will be made explicit in Section~\ref{BoolLsec}, and will also be spelled out in Section~\ref{techproofsec}, where we prove Lemmas~\ref{f1lem0},~\ref{countlem0}, and~\ref{f2lem0}.  Let us just mention here that for a periodic sequence to be an $L$-sequence it is actually sufficient that
$|s^n_R|T_n$ grows sufficiently slowly relative to~$n$.

\section{Coding vectors and crude vectors}\label{codingsec}

For an even positive integer~$k$ let $C_k$ be the set of all Boolean vectors from $2^k$ such that exactly half of their coordinates are $1$'s (so the other half are $0$'s).  Then $(C_k)^\ell$ is a set of
 pairwise incomparable vectors for every positive integer~$\ell$. For every $i \in I$ we will
 choose  sets $\bC_i \subset 2^{X_i}$ of coding vectors with $\bC_i \subset (C_k)^\ell$ for some suitable~$\ell$.
 Of course, the sets of variables $X_i$ are pairwise disjoint, so we cannot literally
 make each $\bC_i$ a subset of $(C_k)^\ell$; formally we will need disjoint copies of  $(C_k)^\ell$.  However, adding an extra parameter (as in:
  $(C_k(i))^\ell$) appears to introduce only clutter and we will use the slightly informal notation for the sake of transparency.  Similarly, we will require that if $b_i \in 2^{R_i}$ codes for an integer, then $b_i \in (C_k)^{m}$
  for some suitable~$m$.  This assures that all our codes for integers will be pairwise incomparable and allows us
  to construct cooperative Boolean functions since every partial Boolean function that is defined on a set of pairwise incomparable Boolean
vectors can be extended to a cooperative Boolean function (see Proposition~\ref{extendprop}).

On the other hand, we need to be able to define values $v_i \in \{0, \ldots , n-1\}$ for $x_i \in \bC_i$ so that we can achieve~(\ref{modaddI}).  Thus we want $\bC_i$ to be sufficiently large so that there exists a bijection $V_i: \bC_i \rightarrow \{0, \ldots , n-1\}$.  We will show next that
this is always possible if we judiciously choose~$k$ and the parameter $\eps$ of our $MM$-system.  The vectors $x_i$ in the sets $\bC_i$ will henceforth be called \emph{coding vectors.}

\begin{definition}\label{frienddef}
Let $c$ be a constant with $1 < c < 2$ and let $\eps > 0$.
We say that the pair $(k, \eps)$ is \emph{$c$-friendly} if

\begin{equation}\label{epsrat}
\eps \mbox{ is rational and } \frac{k}{1+\eps} \mbox{ is an integer,}
\end{equation}

\begin{equation}\label{logceps1}
\log(c) (1 + \eps) <  1, \ \mbox{ and}
\end{equation}

\begin{equation}\label{friend1eq}
|C_k| \geq 2^{k/(1+\eps)}.
\end{equation}
\end{definition}

\begin{lemma}\label{friendlem1}
Suppose $1 < c < 2$.  Then there exist a rational $\eps = \eps(c) > 0$ and a positive even integer $k = k(c)$ such that
 the pair $(k, \eps)$ is $c$-friendly.
\end{lemma}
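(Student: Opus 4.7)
The plan is to choose $\eps$ first, only depending on $c$, and then pick $k$ from a suitable arithmetic progression of even integers large enough to satisfy the size condition~(\ref{friend1eq}).

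First, since $c<2$, we have $\log c < 1$, so the open interval $(0,\, 1/\log c - 1)$ is nonempty. Pick any rational $\eps$ in this interval. By construction $\log(c)(1+\eps) < 1$, verifying condition~(\ref{logceps1}). Next, write $\eps = a/b$ in lowest terms. Then $1+\eps = (a+b)/b$, so $k/(1+\eps) = kb/(a+b)$. Since $\gcd(a,b)=1$ implies $\gcd(a+b,b)=1$, the ratio is an integer iff $(a+b)\mid k$. Together with $k$ being even, the set of admissible $k$ forms an arithmetic progression with common difference $\mathrm{lcm}(2,a+b)$, hence contains arbitrarily large even integers. Any such $k$ will satisfy~(\ref{epsrat}).

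It remains to choose $k$ large enough in this arithmetic progression to guarantee $|C_k| = \binom{k}{k/2} \geq 2^{k/(1+\eps)}$. By Stirling's formula, $\binom{k}{k/2} \geq 2^{k}/(2\sqrt{k})$ for all sufficiently large even~$k$, so taking logarithms it suffices to have
\begin{equation*}
k - \tfrac{1}{2}\log k - 1 \;\geq\; \frac{k}{1+\eps},
\end{equation*}
which rearranges to $k\cdot\frac{\eps}{1+\eps} \geq \tfrac{1}{2}\log k + 1$. Since $\eps>0$ is fixed, the left-hand side grows linearly in $k$ while the right-hand side grows only logarithmically; hence the inequality holds for all sufficiently large~$k$. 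Choosing $k$ in the arithmetic progression identified above, large enough for this inequality to hold, completes the verification of all three conditions in Definition~\ref{frienddef}.

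The main (very mild) obstacle is purely bookkeeping: ensuring the divisibility condition $(a+b)\mid k$ and the parity condition are compatible with choosing $k$ arbitrarily large. This is handled by noting that $\mathrm{lcm}(2,a+b)\NN$ is an infinite set of admissible $k$, so we may always push $k$ past the threshold at which the Stirling-based estimate yields~(\ref{friend1eq}). No subtle analytic input is required beyond a standard lower bound on the central binomial coefficient.
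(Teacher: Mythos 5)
Your proof is correct and follows essentially the same route as the paper: fix a rational $\eps$ with $\log(c)(1+\eps)<1$, then take $k$ large enough using a standard lower bound on the central binomial coefficient $\binom{k}{k/2}$. Your explicit treatment of the divisibility requirement in~(\ref{epsrat}) via the arithmetic progression $\mathrm{lcm}(2,a+b)\NN$ is a welcome bit of bookkeeping that the paper's proof leaves implicit, but it does not change the substance of the argument.
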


\begin{proof}
Fix any rational $\eps(c) > 0$ such that $\log(c) (1 + \eps(c)) <  1$.  Note that
$|C_k| = \binom{k}{k/2} \geq \frac{2^k}{k}$.  Thus for sufficiently large even~$k$ we will have $|C_k| \geq 2^{k/(1+\eps)}$.
\end{proof}

\section{The proof of Lemma~\ref{res1lem}}\label{BoolLsec}

Having defined all its ingredients, let us formally describe the construction of $MM$-systems  for the proof of Lemma~\ref{res1lem}.
Given~$0 < p < 1 < c < 2$, pick $\eps$ and $k$ such that the pair~$(k, \eps)$ is  $c$-friendly.
An integer~$n$ will be called \emph{suitable} if there exists an integer~$\ell$ such that

\begin{equation}\label{nellchoice}
(1 + \eps)\log(n) = k\ell
\end{equation}

The set~$\bM$ in Definitions~\ref{Ldef} and~\ref{Ldefstat} will be the set of all suitable~$n$.

Pick~$\delta$ with $0 < \delta_0 < \delta < 1$ such that~(\ref{logest}) holds. The latter is possible by~(\ref{logceps1}). We will apply Lemma~\ref{countlem0} to a periodic sequence with period~$T$ of vectors $s_R \in 2^R$ such that
for all sufficiently large~$N$:

\begin{equation}\label{Rsizeprelim}
|R|T \leq \delta_0 \log(n).
\end{equation}

 Let $\gamma_1,
\gamma_3 > 0$ be constants that witness that the functions~$F_1,  F_3$ of Lemmas~\ref{f1lem0} and~\ref{f2lem0} are $L$-functions, and let $\gamma_2$ be a constant that witnesses that the sequence of values of the counter is an $L$-sequence whenever the size restriction~(\ref{Rsizeprelim}) holds.
Let $\gamma = \gamma_1  + \gamma_2 + \gamma_3 + 1$, and let $\nu = \gamma_1 +  \gamma_2 + 2\gamma_3$.

 Choose a sufficiently large positive
integer $\beta$ such that

\begin{equation}\label{Yest1}
\gamma \leq \left(\beta\delta_0 - \frac{\nu}{\log c}\right).
\end{equation}

For sufficiently large~$N$, choose a suitable~$n$ such that

\begin{equation}\label{sizeN0}
 \beta(1 + \eps + \delta_0) \log^2 (n) \leq N \leq \left(\beta(1 + \eps + \delta) - \frac{\nu}{\log c}\right) \log^2(n).
\end{equation}

This determines the set $I = \{0, 1, \ldots , \beta \log(n)-1\}$ and allows us to choose sets of variables
$X_i$ for $i \in I$ with $X = \bigcup_{i \in I} X_i$ such that~(\ref{Xisize}) and~(\ref{sizeX}) hold.

  For each $i \in I$ we will identify a set of pairwise incomparable coding vectors $\bC_i$ of~$2^{X_i}$ that correspond to a subset of a copy of $(C_k)^\ell$ and a bijection $V_i : C_i \rightarrow \{0, \ldots , n-1\}$ that computes the values $v_i(x_i)$ that are coded by $x_i \in \bC_i$.  The existence of $\bC_i$ and $V_i$ follows from~(\ref{friend1eq}) and the choice of~$n, \ell$.
 Now we can choose $m$ as the smallest integer with~$|(C_k)^m| \geq |I|$ and choose a set $\bC \subseteq (C_k)^m$ of codes~$b_i$ for the integers~$i \in I$. Finally, we let $R$ be a set of size $km$; we will
 treat $b_i$ as an element of $2^{R}$.
 It follows from our choice of~$m$ that for some constant $\gamma_R$ we have

\begin{equation}\label{Rsize}
|R| \leq \gamma_R \log(\log(n)).
\end{equation}

The period of the counter will be $T = |I| = \beta \log(n)$. Thus $T$ and~$|R|$ depend on~$\beta$, but regardless of the choice of $\beta$ the estimate~(\ref{Rsizeprelim}) will hold for sufficiently large~$N$.

Let us fix~$i$ and write $X_i \cup R$ as a disjoint union of consecutive intervals $Z^j_i$ of length~$k$ each, where $j < \ell + m$ and let $z_i^j$ denote the respective truncations of $z_i \in 2^{X_i \cup R}$ to these intervals.
 Let us say that $z_i$ is \emph{crude} if there exist $j, J < \ell + m$ such that $z_i^j = \vec{0}$ and $z_i^J = \vec{1}$.  Similarly $x_i \in 2^{X_i}$ is crude if we can find $j, J < \ell$ with this property.  Note that all crude
vectors are incomparable with all coding vectors.

Consider a randomly chosen initial state, and let $E$ be the event that all~$x_i(0)$ are crude.

\begin{lemma}\label{crudepr}
Let $k, \eps, \beta$ be fixed. Then $P(E) \rightarrow 1$ as $n \rightarrow \infty$.
\end{lemma}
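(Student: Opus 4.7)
My plan is to treat this as a routine union-bound calculation, exploiting the fact that the coordinates of a uniformly random initial state $\vs(0) \in 2^N$ are i.i.d.\ fair bits and that within a single $X_i$ the $\ell$ length-$k$ blocks $Z_i^0, \ldots, Z_i^{\ell-1}$ form a partition of $X_i$, hence are independent.

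First I would fix $i \in I$ and bound the probability that $x_i(0)$ fails to be crude. Let $A_i$ be the event that $z_i^j(0) = \vec{0}$ for at least one $j < \ell$, and let $B_i$ be the analogous event with $\vec{1}$ in place of $\vec{0}$. By independence of the blocks, $P(A_i^c) = P(B_i^c) = (1 - 2^{-k})^\ell$, and since $x_i(0)$ is crude precisely when $A_i \cap B_i$ occurs, a union bound on the complements gives
\[ P(x_i(0) \text{ is not crude}) \leq 2(1 - 2^{-k})^\ell. \]
Next I would apply a union bound across $i \in I$, using $|I| = \beta \log(n)$, to obtain
\[ P(E^c) \leq 2\beta \log(n) \,(1 - 2^{-k})^\ell. \]

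Finally, substituting $\ell = (1+\eps)\log(n)/k$ and setting $\alpha := -\log(1 - 2^{-k}) > 0$ rewrites $(1 - 2^{-k})^\ell$ as $n^{-\alpha(1+\eps)/k}$, which is polynomially small in~$n$. Combined with the merely logarithmic factor $|I|$, the right-hand side tends to zero, so $P(E) \to 1$. There is no substantive obstacle here; the only point worth double-checking is the parameter bookkeeping, namely that $k, \eps, \beta$ are genuinely fixed before $n$ varies, so that $\alpha$ and the exponent $\alpha(1+\eps)/k$ are positive constants independent of~$n$, and thus the polynomial decay dominates the $\log(n)$ factor.
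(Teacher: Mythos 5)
Your proposal is correct and follows essentially the same route as the paper: a union bound giving $P(x_i(0)\ \text{not crude}) \leq 2(1-2^{-k})^{\ell}$ per index, a further union bound over the $\beta\log(n)$ indices, and the observation that $(1-2^{-k})^{\ell}$ decays polynomially in $n$ since $\ell = (1+\eps)\log(n)/k$ with $k,\eps,\beta$ fixed. Your write-up merely makes the final decay estimate more explicit than the paper does.
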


\begin{proof}
For each $i$, the probability that $x_i(0)$ is not crude is $\leq 2 (1 - 2^{-k})^\ell$.
Thus the probability of the complement of~$E$ is
$\leq 2\beta \log(n) (1 - 2^{-k})^\ell$.  Since $\log(n) = \frac{k}{1 + \eps} \ell$  and $k, \eps, \beta$ are fixed,
the result follows.
\end{proof}

Let $q_1, q_2$ be probabilities such that
 if events $E, F$ occur with probabilities $> q_1$ and  $> q_2$ respectively, then event $E \cap F$ occurs with probability $> \sqrt{p}$.
In particular, we  will assume that~$N$ is sufficiently large so that
$P(E) > q_1$.

Let $i_0 = |I|-1$ and choose two disjoint sets of variables~$R, R_c$ with $|R|= |R_c|$ as in~(\ref{Rsize}). Choose a Boolean circuit $B_3 = B_3(X_{i_0} \cup R, H_3, X_{i_1})$  that witnesses
that the function~$F_3$ of Lemma~\ref{f2lem0} is an $L$-function, where $i_1 = i_0 - d(B_3)$.  Then  choose a Boolean circuit
$B_1 = B_1(X_{i_1} \cup R_c, H_1, X_{i_2})$  that witnesses
that the function~$F_1$ of Lemma~\ref{f1lem0} is an $L$-function, where $i_2 = i_1 - d(B_1)$.
By Definition~\ref{Ldef}  our choice of the input sets of the Boolean input-output systems implies in view of~(\ref{Rsize}) that we will have $\tau_1 = d(B_1) \leq \gamma_1(\log(n) + \gamma_R \log(\log(n)))$ and $\tau_3 = d(B_3) \leq \gamma_3(\log(n) + \gamma_R \log(\log(n)))$.
For sufficiently large~$n$ the terms $\gamma_R \log(\log(n))$ become negligible, and our choices of $\nu, \beta$ and $I$ imply that $i_2 > 0$.

Choose a Boolean input-output system
 $B_2 = B_2(\emptyset, H_2, R)$ that witnesses
that the sequence~$b(t) \in 2^R$ of desired values of the counter is an $L$-sequence for the function that returns $b(t) \in 2^R$ as needed for the argument presented in Section~\ref{description}.  Make sure that the sets $H_1, H_2, H_3, R, R_c, X$ are pairwise disjoint. These choices
determine the regulatory functions for all variables in $B_1 \cup B_2 \cup B_3$, except for the variables in $X_{i_0} \cup R_c$.

For all $i \in I \backslash \{i_1, i_2\}$ define the regulatory functions for the variables in $X_i$ so that they implement the circuit
$B_{id}(X_{i+1}, \emptyset, X_i)$ of Example~\ref{copyex0}, where $|I|-1 + 1$ is treated as zero.

Notice that for the proper working of $B_3$ it only matters whether the value $s_R$ at a given time~$t$ is coding or crude, where the latter
property signals that the input variables in $X_{i_0}$ originate from a possibly corrupted memory location $X_i$ with $i \in Q$. As already mentioned in Section~\ref{description}, the circuit~$B_1$ needs to receive the same signal $\tau_3 = d(B_3)$ time steps later.  This is where $R_c$ comes in:
We make $R_c$ the set of output variables of a Boolean circuit $B_4 = B_4(R, H_4, R_c)$ of depth $\tau_3$ that is the concatenation of~$\tau_3$ copies of~$B_{id}$ and simply copies the values of the input variables in~$R$ to the output variables in~$R_c$ in~$\tau_3$ steps.

This describes the part of our construction that is needed for $p$-$c$-chaos in the resulting system~$\bB$.  Let $\tau_2 = d(B_2)$.
Note that as long as~$N$ is chosen sufficiently large, we can assume that with  probability $> q_2$ for all times~$t \geq \tau_2$ the values $s_R(t)$ will be the required $b(t)$'s.  Thus with probability $> q_2$, at all times~$t > \tau_2 + \tau_3$ the circuit~$B_1$ will receive
the required input values~$b(t)$ on its input variables in~$R_c$.  Moreover, at all times~$t > \tau_3$ the inputs $s_{X_{i_1}}$ of $B_1$ will be coding vectors.  Thus with probability~$> \sqrt{p}$ the circuit~$B_1$ will write the desired output to~$X_{i_2}$ for all times~$t > \tau_1 + \tau_2 + \tau_3$.  It follows that the set~$Q$ of possibly corrupted memory locations has cardinality~$\leq \tau_1 + \tau_2 + \tau_3$.  Since
$\tau_i \leq \gamma_i \log(n)$ by the definitions of $L$-functions and $L$-sequences, our choice of~$\nu$ implies~(\ref{Qest}), and the choice
of~$\beta$ implies that the block of corrupted memory locations does not extend all around the ``data tape,'' that is, implies~$0 \notin Q$.
Now the argument presented at the end of Section~\ref{description} shows that condition~(\ref{modaddI}) will hold with probability~$> \sqrt{p} > p$ and that we get $p$-coalescence on the variables for which have already defined regulatory functions.

The part of the system we have constructed so far consists of variables in the sets $X = \bigcup_{i \in I} X_i$ and
$Y^- := R_c \cup R \cup H_1 \cup H_2 \cup H_3 \cup H_4$ and regulatory functions for them.
Also, since all Boolean input-output systems that we have used so far are cooperative and bi-quadratic, the part of the system that we have constructed so far inherits these properties.

By Definitions~\ref{Ldef} and~\ref{Ldefstat} and our choice of $\gamma$ we have

\begin{equation}\label{Y-est}
|Y^-| \leq \gamma \log^2(n)
\end{equation}
as long as~$n$ is sufficiently large so that $\gamma_R \log(\log(n)) \leq \log(n)$.  We may need to add to~$Y^-$ a set of dummy variables so that
the resulting set $Y$ satisfies $|X| + |Y| = N$.  By~(\ref{sizeN0}) and~(\ref{sizeX}), the resulting~$Y$ will satisfy~(\ref{Yest}).
 We simply define the regulatory function for each dummy variable to be the function that copies a value
from~$X \backslash (X_{i_0} \cup X_{i_1})$.  This retains cooperativity of the whole system.
Since so far each variable in $X$ serves as input to at most one other variable and since
$|Y| < |X| - 2\log(n)$, we have enough different input variables at our disposal to assure that the resulting~$\bB$ will be bi-quadratic.

The resulting structure satisfies all conditions of an $MM$-system, which proves Lemma~\ref{res1lem}.

\section{The proofs of Lemmas~\ref{f1lem0},~\ref{countlem0}, and~\ref{f2lem0}.}\label{techproofsec}

\subsection{Preliminary results}

\begin{proposition}\label{duplprop}
\label{le1}
Let $\Delta: \left\{0,1\right\} \rightarrow 2^Y$ be the function that produces $|Y|$ identical copies of a variable, defined by $(\Delta(x))_y=x$ for $y \in Y$.  Then there exists a bi-quadratic cooperative Boolean circuit~$B_\Delta^{|Y|}$ with $\leq 2\left|Y\right|$ internal variables that calculates~$\Delta$ in~$\leq \log(\left|Y\right|)$ steps.
\end{proposition}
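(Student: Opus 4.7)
The plan is to realize $B_\Delta^{|Y|}$ as a binary tree of unary COPY gates, rooted at the single input variable $x$ and having the $|Y|$ output variables placed at its leaves. Concretely, I would fix $d=\lceil\log|Y|\rceil$ and construct a full binary tree with exactly $|Y|$ leaves, all at depth $d-1$ or $d$: start from a complete binary tree on $2^d$ leaves at depth $d$, and promote $2^d-|Y|$ sibling pairs into single leaves at depth $d-1$. This leaves $2|Y|-2^d$ leaves at depth $d$ and $2^d-|Y|$ leaves at depth $d-1$, for a total of $|Y|$ leaves.

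Next I would designate $D=\{x\}$ as the root, $R=Y$ as the leaves, and let $H$ consist of the remaining non-root non-leaf tree nodes. Every $v\in H\cup R$ is assigned the unary COPY gate that reads the parent of $v$ in the tree. A simple induction on $k$ shows that after $k$ synchronous updates every node at depth $k$ holds the value $x$; in particular, after $d$ steps every leaf holds $x=\Delta(x)$, so $B_\Delta^{|Y|}$ calculates $\Delta$ in at most $\lceil\log|Y|\rceil$ steps.

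For the structural checks, recall that a full binary tree with $|Y|$ leaves has exactly $|Y|-1$ internal nodes, so $|H|=|Y|-2\leq 2|Y|$. Every gate is a COPY, so no NOT is used and the circuit is cooperative. Each non-root node has fan-in $1$ (its parent), and each non-leaf node has at most two children, so each variable in $D\cup H$ feeds at most two successors; hence $B_\Delta^{|Y|}$ is bi-quadratic. The only mild care needed is when $|Y|$ is not a power of $2$, where the depth must be $\lceil\log|Y|\rceil$; the one-level promotion described above handles this cleanly, and no further obstacle arises, since the entire argument is a standard fan-out doubling by a binary tree.
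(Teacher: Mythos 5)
Your proof is correct and follows essentially the same route as the paper: a binary fan-out tree of COPY gates rooted at the input, giving at most $2|Y|$ extra variables and depth about $\log|Y|$ (your explicit handling of non-powers of two and the leaf/internal-node count is just a more careful write-up of the paper's doubling argument). No gaps.
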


\begin{proof}
We consider only bi-quadratic systems, so at one step the content of a variable can be copied to two variables only. It means that we need $\left|Y\right|+\left|Y\right|/2+ \ldots+1 \leq 2\left|Y\right|$ variables and $\leq \log(\left|Y\right|)$ steps.
\end{proof}

\begin{proposition}\label{longconprop}
\label{le2}
The functions $\wedge:2^X \rightarrow \left\{0,1\right\}$ and $\vee:2^X \rightarrow \left\{0,1\right\}$, defined by
\[ \wedge(x)=\wedge(x_1,\ldots, x_n)=x_1 \wedge \ldots \wedge x_n, \]
\[ \vee(x)=\vee(x_1, \ldots,x_n)=x_1 \vee \ldots \vee x_n, \]
for $x=(x_1, \ldots,x_n) \in 2^X$, can be calculated by bi-quadratic cooperative Boolean circuits with $\leq 2\left|X\right|$ variables in $\leq \lceil\log(\left|X\right|)\rceil$ steps.
\end{proposition}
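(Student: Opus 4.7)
The plan is to construct the circuit as a balanced binary tree of the corresponding binary gate: binary AND for $\wedge$ and binary OR for $\vee$. Since these are binary cooperative gates and the tree structure means every internal variable serves as input for exactly one gate at the next level, the resulting circuit is automatically bi-quadratic and cooperative. I will only describe the construction for $\wedge$; the construction for $\vee$ is completely analogous.

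More concretely, I would label the input variables as level~$0$, and then at level~$j+1$ pair the variables at level~$j$ and feed each pair into a fresh binary AND gate. If the number of variables at level~$j$ is odd, I promote the leftover variable to level~$j+1$ by copying it with a unary COPY gate. This keeps the depth equal to the number of halvings needed to reduce $|X|$ to one, namely $\lceil \log|X|\rceil$, and preserves cooperativity and bi-quadraticity. The top of the tree is a single variable that carries the value of $x_1 \wedge \dots \wedge x_n$ after $\lceil \log|X|\rceil$ updates.

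For the variable count, the number of variables introduced at level $j \geq 1$ is at most $\lceil |X|/2^j \rceil$, so the total number of internal variables is bounded by $\lceil|X|/2\rceil + \lceil|X|/4\rceil + \cdots + 1$, which is at most $|X|$ (one only has to be slightly careful with the small number of leftover variables arising from rounding, but they are absorbed into the same geometric bound). Adding the $|X|$ input variables of the circuit yields the required $\leq 2|X|$ variables total.

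The only mild obstacle is the bookkeeping when $|X|$ is not a power of two, and this is resolved cleanly by the COPY promotion described above: a COPY gate is cooperative, unary (hence bi-quadratic), and contributes $1$ to the depth at that level only if its output is then paired up at the next level, which never increases the total depth beyond $\lceil \log|X|\rceil$ since the number of live variables at level~$j$ is $\lceil |X|/2^j\rceil$ regardless. Hence the construction produces a cooperative bi-quadratic Boolean circuit that calculates $\wedge$ within the stated depth and variable bounds, and the same blueprint with OR gates handles $\vee$.
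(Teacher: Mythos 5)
Your proposal is correct and follows essentially the same route as the paper: a balanced binary tree of binary AND (resp.\ OR) gates of depth $\lceil\log|X|\rceil$, with the variable count bounded by the geometric series as in Proposition~\ref{le1}; the only cosmetic difference is that the paper handles an odd leftover variable via $x_n\wedge x_n$ rather than a COPY gate.
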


\begin{proof}
Group variables into pairs and use a  cooperative bi-quadratic Boolean circuit of depth~1 to calculate $x_1 \wedge x_2, x_3 \wedge x_4, \dots , x_{n-1} \wedge x_n$.  If~$n$ is odd, use $x_n \wedge x_n$ for the last pair instead.
Then use induction and concatenation of Boolean circuits.  The total number of internal variables can be estimated as in the proof of Proposition~\ref{le1}.
\end{proof}

\begin{corollary}\label{techlem}
For every positive integer $r$ there exist positive integers $u(r)$ and $d(r)$ such that every cooperative Boolean function
 $f: 2^r \rightarrow 2^r$ can be calculated by a cooperative bi-quadratic Boolean circuit with at most $u(r)$ internal variables that calculates~$f$ in $d(r)$ steps.
\end{corollary}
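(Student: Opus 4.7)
The plan is to reduce the statement to the two preceding propositions by exploiting the fact that cooperative (\emph{i.e.,} monotone) Boolean functions admit a negation-free disjunctive normal form.  Write $f=(f_1,\dots,f_r)$ where each $f_i: 2^r \to \{0,1\}$ is cooperative.  As remarked in the paragraph following Proposition~\ref{extendprop}, each $f_i$ can be represented by a Boolean formula built from COPY, AND, OR, and Boolean constants only.  Concretely, letting $T_i = \{\vec{x} \in 2^r : f_i(\vec{x}) = 1\}$ and $M_i \subseteq T_i$ denote the minimal elements of $T_i$, the positive DNF
\[
f_i(\vec{x}) = \bigvee_{\vec{m} \in M_i} \bigwedge_{j : m_j = 1} x_j
\]
is cooperative and realizes $f_i$; if $M_i = \emp$ (resp.\ if $\vec{0} \in M_i$) the function is the constant $0$ (resp.\ $1$), both of which are trivially computable.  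Each clause is an AND of at most $r$ input literals, and there are at most $|M_i| \le 2^r$ clauses per coordinate.

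Next I would assemble the circuit in four stages.  First, using Proposition~\ref{duplprop}, duplicate each of the $r$ input variables into $r \cdot 2^r$ copies in $\le \log(r \cdot 2^r) = O(r)$ steps and $O(r \cdot 2^r)$ internal variables; this supplies enough copies to feed every occurrence of $x_j$ in the positive DNFs of all $f_i$ simultaneously, which is necessary to keep the circuit bi-quadratic.  Second, for each coordinate $i$ and each clause $\vec{m} \in M_i$, apply Proposition~\ref{longconprop} to compute the corresponding $\wedge$ of at most $r$ variables in $\le \lceil \log r \rceil$ steps with $\le 2r$ internal variables per clause.  Third, again by Proposition~\ref{longconprop}, OR together the (at most $2^r$) resulting clause outputs in $\le \lceil \log(2^r) \rceil = r$ steps using $\le 2 \cdot 2^r$ internal variables per coordinate.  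Finally, concatenate the three stages using the concatenation construction recalled in Section~\ref{Boolsec}; the total depth is $O(r)$ and the total number of internal variables is $O(r \cdot 2^r)$, both depending only on $r$.  Setting $d(r)$ and $u(r)$ to these bounds gives the claim.

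Cooperativity and bi-quadraticity are preserved throughout: Propositions~\ref{duplprop} and~\ref{longconprop} already deliver bi-quadratic cooperative circuits, concatenation preserves both properties, and the final construction uses no NOT gate.  The only genuine thing to check, rather than a routine calculation, is that the duplication stage provides enough fan-out so that no internal variable in the AND/OR stages is used as input more than twice; this is guaranteed by requesting $r \cdot 2^r$ copies of each $x_j$ up front (one for each of the $\le 2^r$ clauses across all $r$ coordinates, which is an overestimate).  Thus the main (mild) obstacle is merely bookkeeping of the fan-out, and no deeper combinatorics is required.
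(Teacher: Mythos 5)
Your proposal is correct and follows essentially the same route as the paper, whose proof is just the one-line observation that the negation-free disjunctive (or conjunctive) normal form of a cooperative function can be implemented by combining Propositions~\ref{duplprop} and~\ref{longconprop}. Your version merely spells out the fan-out bookkeeping and the resulting bounds $u(r)$, $d(r)$, which the paper leaves implicit.
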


\begin{proof}
Note that the Conjunctive or the Disjunctive Normal Form of cooperative Boolean functions do not use negations and use Propositions~\ref{le1}
and ~\ref{le2}.
\end{proof}

\subsection{The proof of Lemma~\ref{f1lem0}}\label{Lemmasf1lem0sec}

Assume the pair $(k, \eps)$ is $c$-friendly and $(1 + \eps)\log(n) = k\ell$.  Let $F_1^n$ be a function that takes as input a pair of vectors $(s_{X}, s_{R})$ with $|X| = \log(n)$ and $|R| \leq \gamma_R \log(\log(n))$ for a fixed constant $\gamma_R$ such that $F_1^n$ returns the code for $V(s_X) + 1$ modulo $n - i$ whenever $s_X$ is coding and $s_R$ is the code for~$i$.   We need to construct a cooperative bi-quadratic Boolean circuit $B = B(X \cup R, H, Z)$ of depth $\leq \gamma \log(n)$ with $|H| \leq \gamma (\log(n))^2$ for some fixed $\gamma$ that calculates~$F_1^n$.

Since all codes are pairwise incomparable, by Proposition~\ref{extendprop} we may assume that $F_1^n$ is cooperative, and hence can be calculated by a cooperative bi-quadratic Boolean circuit.  Unfortunately, we cannot use Lemma~\ref{techlem} directly, since it does not give us sufficient control on how fast the depth and number of internal variables of this circuit grow with~$n$.  Our strategy in this proof will be to break down the codes of the integers involved into small chunks of fixed size, use Lemma~\ref{techlem} to build Boolean circuits of fixed size and depth that calculate certain auxiliary functions $g_1, g_2, g_3, g_4$ on these chunks, and to use
Propositions~\ref{duplprop} and~\ref{longconprop} to calculate auxiliary functions~$h_1, h_2$ that keep track of the global picture and allow us to concatenate the smaller Boolean circuits into a larger one with the desired properties.

 Let $D \subset C_k$ be such that $|D| = K = 2^{\frac{k}{1 + \eps}}$ and let
$w: D \rightarrow \{0, \ldots , K-1\}$ be a bijection. In this proof it will be convenient to use the notation $x$ for the input vector $s_X$ and the notation $r$ for the input vector $s_R$ of our Boolean circuit.  Then $x \in 2^{k\ell}$ and wlog we may assume that $r \in 2^{km}$ for some $m < \ell$. A vector $x$ ($r$) is coding if
$x \in  D^\ell$ ($r \in  D^m$). We partition
$x, r$ into consecutive vectors~$x^j \in 2^{X^j}, r^j \in 2^{R^j}$ of length~$k$ each (as in Section~\ref{codingsec}). The corresponding decoding function $V$ for~$x$ is then defined by

\begin{equation}\label{Vdef}
V(x) = \sum_{j=0}^{\ell-1} w(x_j)K^j.
\end{equation}

Consider four functions with the following properties:

\begin{itemize}
\item $g_1: C_k \rightarrow 2^2$ returns~$(1,0)$ for the input $x^j \in D$ with $w(x^j) = K-1$ and returns~$(0, 1)$ on all other inputs in~$D$.
\item $g_2: C_k \times 2^2 \rightarrow C_k$ is such that $g_2(x^j, 1, 0), g_2(x^j, 0, 1) \in D$ with $w(g_2(x^j, 1, 0)) = w(x^j) + 1$ and $w(g_2(x^j, 0, 1)) = x^j$ whenever~$x^j \in D$.
\item $g_3: C_k \times C_k \rightarrow 2^2$ is such that $g_3(x^j, x^j) = (1, 0)$ whenever $x^j \in D$, and $g_3(x^j, y^j) = (0, 1)$ whenever $x^j, y^j \in D$ with $x^j \neq y^j$.
\item $g_4: C_k \times 2^2 \rightarrow C_k$ is such that $g_4(x^j, 1, 0), g_4(x^j, 0, 1) \in D$ with $w(g_4(x^j, 1, 0)) = 0$ and $w(g_4(x^j, 0, 1)) = w(x^j)$ whenever $x^j \in D$.
\end{itemize}

Since the requirements we have specified for these functions only pertain to subsets~$D, D \times \{(0,1),(1,0)\}$, or $D \times D$  of the domains that consist of pairwise incomparable vectors, we may assume that these functions are cooperative.  Thus they can be calculated by cooperative, bi-quadratic Boolean circuits $B_1, B_2, B_3, B_4$ whose sizes and depths are constant in the sense that they do not depend on~$n$.

Let $h_1: (2^2)^\ell \rightarrow (2^2)^\ell$ be a cooperative function such that the $j$-th coordinate of its output is $(1,0)$ whenever the $i$-th coordinates of its input vector are $(1,0)$ for all~$i < j$, and is $(1,0)$ whenever at least one of the $i$-th coordinates of its input vector is $(0, 1)$ for some~$i < j$.  By Proposition~\ref{longconprop}, such~$h_1$ can be calculated by a cooperative, bi-quadratic Boolean circuit $B^1$ with $\leq 4\left|\ell\right|$ variables in $\leq \lceil\log(\left|\ell\right|)\rceil$ steps.  If we compose a product of~$\ell$ copies of the functions~$g_1$ on individual components~$x^j$ of~$x$ with~$h_1$, we obtain a vector in~$(2^2)^\ell$ that indicates the location of carry-over digits in the operation~$w(x) + 1$ as performed on the codes, with $(1,0)$ indicating that $w(x^j)+1$ needs to be calculated, whereas~$(0, 1)$ signals that~$x^j$ needs to be copied.  This composition can be calculated by a bi-quadratic cooperative Boolean circuit~$B_c$ with $\leq \gamma_1 \left|\ell\right|$ variables and depth $\leq \gamma_1\lceil\log(\left|\ell\right|)\rceil$, where the constant~$\gamma_1$ depends on the size and depth of~$B_1$.

If we first duplicate the value of~$x$, keep copying it until~$B_c$ finishes its calculations, and then use the copies of~$x^j$ as the first inputs in copies of~$B_2$, with the coordinates of the output of~$B_c$ providing the second inputs, we can construct a cooperative bi-quadratic Boolean circuit~$B_a$ with $\leq \gamma_2 \left|\ell \right|$ variables and depth $\leq \gamma_2 \lceil\log(\left|\ell\right|)\rceil$ that calculates the code for~$w(x) + 1 \ mod \ n$ for every coding vector~$x$.  Here $\gamma_2$ is another constant that depends on~$\gamma_1$ and the size and depth of~$B_2$.

It remains to compare the output of~$B_a$ with $n - i$, where $i$ is coded by $r$ and reset it to zero if in fact the two vectors are equal.  So far, we have not specified how $i$ is coded by the elements of the counter; it will be most convenient if we assume that~$r$ codes the last $mk$ binary digits of~$n-i$ according
to~(\ref{Vdef}).  Since $m \ll \ell$, we have a code for~$i$ in this sense iff for the partition of the output of~$B_a$ into consecutive~$y^j$s we have $y^j = r^j$ for all $j < m$ and $w(y^j) = K-1$ for $j \geq m$.  We can use a product of~$m$ functions of the form~$g_3$ with~$\ell - m$ functions of the form~$g_1$ to code the equality as a vector in~$(2^2)^\ell$ all of whose coordinates are~$(1,0)$, while inequality will be signified by at least one coordinate of the form~$(0, 1)$.  Composing this product with a cooperative function~$h_2: (2^2)^\ell \rightarrow 2^2$ that returns~$(1,0)$ on the code for equality and~$(0, 1)$ on all codes for inequality results in a function that detects a code for~$n-i$.  Again, by Proposition~\ref{longconprop}, this composition can be calculated by a cooperative, bi-quadratic Boolean circuit~$B_e$ with $\leq \gamma_3 \left|\ell\right|$ variables and depth $\leq \gamma_3\lceil\log(\left|\ell\right|)\rceil$, where the constant~$\gamma_3$ depends on the size and depth of~$B_3$.

To put it all together, we need to copy the value of~$r$ until it will be needed for the calculation of~$B_e$, and also retain a copy~$y$ of the output of~$B_a$ until~$B_e$ has finished its calculations.  Finally, we can use the coordinates of that copy of~$y$ and of the output of~$B_e$ as inputs to $\ell$ copies of the Boolean
circuit~$B_4$ that calculates~$g_4$.  This adds $\leq \gamma_4 \left|\ell\right|$ variables and  $\leq \gamma_4\lceil\log(\left|\ell\right|)\rceil$ steps to the final Boolean circuit~$B$.  Since~$\log(n) \leq \ell$, we conclude that~$B$  will have depth $\leq \gamma \log(n)$ and $\leq \gamma (\log(n))^2$ variables for some fixed $\gamma$.

It is straightforward to verify that~$B$ is cooperative, bi-quadratic, and calculates~$F_1^n$ as required. $\Box$

\subsection{The proof of Lemma~\ref{countlem0}}\label{countlemsec}

Lemma~\ref{countlem0} follows from Lemma~\ref{countlem+} below by letting $R$ be the set of  variables for our counter and $T = |I|$, with $g$ coding the sequence of desired values of the counter.  By~(\ref{Rsize}) and the choice of~$I$ we have  $|R|T \leq  (log(n))^\alpha$ for all $\alpha > 1$ and sufficiently large~$n$, and therefore $B_c$ satisfies the  restrictions on its depth and number of variables that are implied by Lemma~\ref{countlem0}.

\begin{lemma}\label{countlem+}
Let $0 < q < 1$.  Then there exists a positive constant $\gamma$ such that for every nonempty set $R$ and positive integer $T$ and every function
$g: \{0, \dots , T-1\} \rightarrow 2^R$ there exists a Boolean input-output system $B_c = B_c(\emptyset, H, R)$ with the following properties:

\smallskip

\noindent
(i) $|H| \leq \gamma |R| T\log(|R|T)$;

\smallskip

\noindent
(ii) $d := d(B_c) \leq \gamma \log(|R|T)$;

\smallskip

\noindent
(iii) $B_c$ is cooperative and bi-quadratic;

\smallskip

\noindent
(iv) There exists a fixed state $s^*_H \in 2^H$ such that for a proportion of $> q$ of all possible initial states of the variables in~$H$ the following two conditions hold:

\smallskip

\noindent
(v) $s_H(2d+1) = s^*_H$;

\smallskip

\noindent
(vi) For all times $t \geq d$ the system writes the value $g(t\ mod \ T)$ to its output variables~$R$.
\end{lemma}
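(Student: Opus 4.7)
The plan is to build $B_c$ out of two interlocking pieces: $T$ stationary ``memory cells'' of width $|R|$ each, intended to hold the constant values $g(0), \dots, g(T-1)$, and a $T$-bit ``phase token'' that at time $t$ marks position $t \bmod T$. Reading the output off as the cooperative expression $y_r(t) = \bigvee_{j} p_j(t) \wedge c_j^{(r)}(t)$ (where $p_j$ is phase-token bit $j$ and $c_j^{(r)}$ is bit $r$ of cell $j$) gives the desired $g(t \bmod T)$ whenever the cells hold their intended values and the token is a correctly-rotating one-hot vector.

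For the memory cells I would exploit the asymmetry between OR and AND on random inputs: the OR of $K$ independent uniform random bits is $1$ with probability $1 - 2^{-K}$, and their AND is $0$ with the same probability. For each cell-bit whose target value is $1$ I would compute that bit by a balanced OR-tree over $K$ fresh auxiliary hidden bits, then feed it into an absorbing OR-latch that can only increase; for each $0$-target cell-bit I would analogously use an AND-tree feeding an absorbing AND-latch that can only decrease. The pattern of OR's vs.\ AND's encodes $g$ in the gate topology alone, so no Boolean constants are required. Setting $K = C(q)\log(|R|T)$ and taking a union bound over the $|R|T$ cell bits bounds the seeding failure probability by $|R|T \cdot 2^{-K} < 1-q$, using depth $O(\log K)$ and $O(|R|T \log(|R|T))$ auxiliary variables, well within the lemma's budget.

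The main obstacle is producing the rotating phase token itself: this is a small cooperative counter on $T$ bits, which cannot use the absorbing-latch trick directly since its intended contents must change over time. My plan is to construct the token as a cooperative shift register on $T$ bits whose initial one-hot configuration is installed by the same OR/AND-tree-plus-latch scheme as for the cells, after which an $O(\log T)$-deep hand-off layer commits the seeded pattern into the ring and the ring begins shifting under pure COPY. Making this rigorous --- in particular, verifying that the transition from the latched seeding to the free-running rotation happens cleanly under a time-homogeneous cooperative rule without explicit clocking, and that the probability bound $>q$ survives through the handoff --- is the most delicate step, and I expect it to require an intermediate ``release'' variable whose monotone absorption is engineered to coincide with the depth at which all seeding trees have stabilized. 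Once both cells and phase token are synchronized, a final $O(\log(|R|T))$ layer computes the $\bigvee_j p_j \wedge c_j^{(r)}$ output, and the hidden state at $t = 2d+1$ is the deterministic post-absorption configuration of all trees, latches, cells, and the rotated token, giving the required fixed $s^*_H$.
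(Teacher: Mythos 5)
Your architecture (stationary memory cells holding $g(0),\dots,g(T-1)$ plus a rotating one-hot phase token, read out through $\bigvee_j p_j\wedge c_j^{(r)}$) differs from the paper's, which instead rotates the data itself: the top level is a circular shift register of $T$ blocks of width $|R|$ whose last block is copied to the output. Your cell-seeding idea (OR-trees/AND-trees over $K=O(\log(|R|T))$ random hidden bits feeding monotone latches, with a union bound) is fine for the stationary cells, since those only need to be correct \emph{eventually}. But the step you yourself flag as delicate is exactly where the whole difficulty of the lemma lives, and your sketch does not resolve it. To launch the token you must force one ring position to $1$ and flush the rest to $0$ at a \emph{deterministically timed, single} step: requirement (vi) pins the absolute phase ($g(t\bmod T)$, not $g((t+\text{shift})\bmod T)$), and under a time-homogeneous cooperative rule an injection gate of the form $s_{\mathrm{ring}}(t+1)=s_{\mathrm{ring}'}(t)\vee s_{\mathrm{release}}(t)$ writes a $1$ at \emph{every} step during which the release signal is active. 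A monotone ``release'' latch driven by randomly initialized circuitry flips at a random time (garbage inside the trees before they stabilize can trigger it early) and then stays flipped, so its active window has random length and random placement: too long a window injects a block of $1$s into the rotating ring, and a random flip time gives a random phase. The same problem recurs for flushing the ring's random initial content, since the injected $1$ must survive every position whose ``kill'' signal may still be active. So the ``release variable whose monotone absorption coincides with the stabilization depth'' cannot work as described; monotone absorption gives ``by time $d$,'' never ``exactly at time $d$.''

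The paper's proof is built precisely around a gadget that produces such exactly-timed pulses: a register of $K=4|P|$ bits with self-absorbing updates $s_{h_l(i)}(t+1)=s_{h_l(i)}(t)\wedge s_{h^*(i)}(t)$ is fed through an AKS sorting network of depth $O(\log K)$; binomial concentration forces the number of zeros to be $<9K/16$ at time $0$ and $>11K/16$ at time $1$, so the designated window of sorted positions is $1$ at exactly time $d-1$ and $0$ at time $d$ and forever after, with a single global probabilistic event (no per-bit union bound) and with all pulses synchronized. These pulses are then used, via OR/AND injection gates as in equations~(\ref{Hd2dynamics})--(\ref{modifc}), to load the whole rotating register at one common time and to become neutral thereafter. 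You would need this (or an equivalent deterministic-timing mechanism) to make your hand-off rigorous. A second, smaller gap: condition (v) requires the hidden state at time $2d+1$ to equal a \emph{fixed} vector $s^*_H$, so the auxiliary random bits you harvest must themselves be driven to a deterministic configuration; if the tree leaves merely retain their random initial values, $s_H(2d+1)$ is not fixed. In the paper this comes for free because the seeding register absorbs to all zeros, which then propagates through the sorting network; your construction would need an analogous erasure mechanism added and verified to complete within the stated depth.
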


\bigskip

\begin{proof}
 Let $q, R, T, g$ be as in the assumption. Note that we may wlog assume that $|R| T$ is sufficiently
large, since we can replace $T$ with $T^+ = kT$, choose $g(t) = g(t - T)$ for $t \geq T$, and make $\gamma$ sufficiently large to cover the  finite
set of instances whose  original size $|R| T$ was too small for our probability estimates.

In order to make the idea of our construction more transparent, let us begin by specifying the dynamics on the highest level $H(d-1)$ and the output variables.  The set $H(d-1)$ will be partitioned into pairwise disjoint subsets $H_j$ for $j = 0, \ldots , T-1$, each of size $|R|$. For any $t > d-1$, let $t^* = t - d + 1 \ mod \ T$.

Let us enumerate $H_j = \{h_{i,j}: \  i \in [|R|]\}$ for $j \in \{0, \ldots , T-1\}$ and let us provisionally define

\begin{equation}\label{provisc}
h_{i,j}(t+1) = h_{i,j-1}(t),
\end{equation}

\bigskip

where we interpret $j - 1 = -1$ as $T-1$.  This definition will give us the correct output sequence in the sense of~(vi)
 as long as $s_{H_j} (d-1) = g(t)$ for all $t \in \{0, \ldots , T-1\}$.  In other words, we need to make sure that, with sufficiently high probability,   each of the variables $s_{h_{i,j}}$ assumes a certain value $h^*_{i,j}$ at time $d-1$, and that at all subsequent times the dynamics
of $H(d-1)$ is identical to the dynamics specified by~(\ref{provisc}). If we can achieve this, then we can arrange for the output vector $s_R(t+1)$ to simply be a copy of $H_{T-1}(t)$.

The following trick allows us to achieve this goal. Let

\[ P_0=\{ h_{i,j} \in H(d-1): h^*_{i,j}=0 \}, \ P_1=\{ h_{i,j} \in H(d-1): h^*_{i,j}=1 \}. \]

We will construct~$B_c$ in such a way that there exist variables $k_{i,j} \in H(d-2)$, where $H(d-2)$ is the level right below $H(d-1)$, $i \in |R|$, $j<T$ such that with probability $>q$:

\begin{equation}\label{Hd2dynamics}
\begin{split}
\forall h_{i,j} \in P_0 \ s_{k_{i,j}}(d-2) = 0 \ &\& \ \forall t > d-2 \ s_{k_{i,j}}(t) = 1, \\
\forall h_{i,j} \in P_1 \ s_{k_{i,j}}(d-2) = 1 \ &\& \ \forall t > d-2 \ s_{k_{i,j}}(t) = 0.
\end{split}
\end{equation}

Now we modify~(\ref{provisc}) as follows:

\begin{equation}\label{modifc}
\begin{split}
\mbox{If} \ h_{i,j} \in P_0, \ \mbox{then} \ s_{h_{i,j}}(t+1) = s_{h_{i-1,j}}(t) \vee s_{k_{i, j}} (t),\\
\mbox{If} \ h_{i,j} \in P_1, \ \mbox{then} \ s_{h_{i,j}}(t+1) = s_{h_{i-1,j}}(t) \wedge  s_{k_{i, j}} (t).
\end{split}
\end{equation}

Notice that in view of~(\ref{Hd2dynamics}), our revised definition~(\ref{modifc}) of~(\ref{provisc}) ensures that with probability~$>q$, at time~$d-1$ all variables $h_{i,j}$ will take their desired values, and the input from level~$H(d-2)$ will not influence the  dynamics at level $H(d-1)$ at any time $t> d-1$.

Therefore, given $q<1$, a set $P$ of sufficiently large size, and a partition $P_0 \cup P_1$ of~$P$, it suffices to construct a system $B_c(\emptyset,H,P)$ of depth~$d$, and $s^*_H  \in 2^H$ such that for some fixed $\gamma>0$ that does not depend on~$|P|$, with probability~$> q$:

\begin{enumerate}[(a)]
\item $\left| H \right| \leq \gamma |P| \log(|P|)$;
\item $d \leq \gamma \log(|P|)$;
\item for $p_0 \in P_0$ we have $s_{p_0}(d)=0$ and $s_{p_0}(t)=1$ for all $t\geq d+1$;
\item for $p_1 \in P_1$ we have $s_{p_1}(d)=1$ and $s_{p_1}(t)=0$ for all $t\geq d+1$;
\item $s_H(2d+1)=s^*_H$,
\end{enumerate}

where points (c), (d), (e) will hold  with probability~$> q$.

We show how to construct such $B_c(\emptyset,H,P)$, provided that $P_0$ is empty. If this is not the case, we can construct a dual system to take care of the case $P_1 = \emptyset$ in an analogous way and take the disjoint union of the two systems.

Let $H_l$ be the lowest level of $H$, let $H_h$ be the highest level, and let $H_r$ denote all the remaining levels. Wlog we assume that $|P|/16$ is an integer, and that $H_l$ and~$H_h$ consist of $K=4\left|P\right|$ variables. Let

\[ P=\{p(i): i \leq  K/4 \}, \quad H_l=\{h_l(i): i \leq K \}, \quad H_h=\{ h_h(i): i \leq K \}. \]

Now let
\begin{equation}\label{H*numb}
\{h^*(1), \ldots , h^*(K)\}
\end{equation}
be an enumeration of the variables $h_h(1),h_h(2) \ldots,  h_h(K/2)$ such that each variable in this set gets listed exactly twice. Similarly, let
\begin{equation}\label{H**numb}
\{h^{**}(1), \ldots , h^{**}(K/4)\}
\end{equation}

be an enumeration of the variables $h_h(9K/16 + 1), \ldots, h_h(11K/16)$, such that each variable in this set gets listed exactly twice.
The regulatory functions for the lowest level $H_l$ are given by

\begin{equation}\label{HZ(0)reg}
s_{h_l(i)}(t+1) = s_{h_l(i)}(t) \wedge s_{h^*(i)}(t), \, i \leq K
\end{equation}

and for the output variables by

\begin{equation}\label{HZ(0)out}
s_{p_i}(t+1) = s_{p_i}(t) \wedge s_{h^{**}(i)}(t), \, i \leq K/4.
\end{equation}

Now let $H_r$ be an implementation of the sorting function $f_s:H_l \rightarrow H_h$, that is, a function such that the number of zeros in $x$ is the same as in $f(x)$ but all the zeros in $f(x)$ precede all the ones in $f(x)$. By a result in~\cite{AKS}, there exists a \emph{sorting network} of depth $O(\log(K))$ that will implement $f_s$. A sorting network performs at each step a certain number of pairwise comparisons on disjoint sets of two variables and switches the variables if they
are in the wrong order.  In the Boolean context, this operation can be implemented as $sw(s_i, s_j) = [s_i \wedge s_j, s_i \vee s_j]$; a variable that does not participate in any comparison at a given step will simply be copied.  Thus the function~$f_s$ can be calculated  by a cooperative bi-quadratic Boolean circuit of depth $O(\log(K))$ with $O(K\log(K))$ variables, so the system $B_c(\emptyset, H,P)$ satisfies (a) and (b).

Let us make a few observations about the construction that we have described.  First, note that we will end up with a Boolean input-output system rather than a Boolean circuit since there is feedback from levels~$H_h$ and~$H_l$ itself to level~$H_l$.
This will be the only feedback loops, the variables at all other levels of~$H$ will take input only from the next lower level.

Now define for all times $t \geq 0$:

\begin{equation}\label{Sdef}
S(t) =K- \sum_{i \leq K} s_{h_l(i)}(t).
\end{equation}

Note that the self-feedback~(\ref{HZ(0)reg}) at level~$H_l$ implies that

\begin{equation}\label{Smon}
\forall t \ S(t+1) \geq S(t).
\end{equation}

For a randomly chosen initial state, $S(t)$ is a random variable.   Note that $S(0)$ is a binomial random variable that counts the number of failures (zeros) in $K$ independent trials with success probability~$0.5$.  In contrast, $S(1)$ is a binomial random variable that counts the number of failures in $K$ independent trials with success probability~$0.25$. A straightforward application of the Central Limit Theorem shows that

\[ Pr (S(0)<9K/16 \ \& \ S(1)>11K/16)> {q} \]

for all sufficiently large $K$.

This means that for a proportion of $>q$ states we will have $s_{h_h(i)}(d-1)=1$ for $9/16 < i \leq 11/16$, and $s_{h_h(i)}(d)=0$ for $9/16 < i \leq 11/16$, or

\[ s_{p(i)}(d)= 1 \quad \mbox{for all} \quad i \leq 1/4K, \]
\[ s_{p(i)}(d+1)=0 \quad \mbox{for all} \quad  i \leq 1/4K. \]

By (\ref{Smon}), we get that

\[ s_{p(i)}(t)=0 {\rm \ for \ } i \leq K/4 {\rm \ and \ } t\geq d+1, \]

so (c) and (d) hold. Also, $s_{h_h(i)}(d)=0$ for $i \leq K/2$, so $s_{h_l(i)}(d+1)=0$ for $i \leq K$. By (\ref{Smon}) again,
\[ s_{h_l(i)}(t)=0 {\rm \ for \ all \ } i \leq K {\rm \ and \ } t \geq d+1,\]
which clearly implies (e).
\end{proof}

\subsection{The proof of Lemma~\ref{f2lem0}}\label{f3lemsec}

Recall the definition of crude vectors from the end of Section~\ref{codingsec} and let $r = |R_i|$. The following technical result is identical with Lemma~\ref{f2lem0}, except that we relabeled the input vectors for convenience as $z \in 2^{k\ell + r}$ instead of $(s_{X_{i_0}}, s_R)$, with $z\upharpoonright 2^{k\ell}$ playing the role 
of $s_{X_{i_0}}$, and consider the output vector simply as an element of~$2^{k\ell}$ instead of specifying $X_{i_1}$ as its set of coordinates.

\begin{lemma}\label{f3lem}
Fix $k$. For every positive integer $\ell$ let $\cZ$ be the set of all vectors $z \in 2^{k\ell + r}$ that are crude or such that
$x = z\upharpoonright 2^{k\ell}$ is coding. Fix a coding vector $x^* \in 2^{k\ell}$.  Then there exists an $L$-function
$F_3: 2^{k\ell + r} \rightarrow 2^{k\ell}$  that satisfies for all inputs~$z \in \cZ$:

\begin{itemize}
\item $F_3(z)=x = z\upharpoonright 2^{k\ell}$ if $z$ is coding;
\item $F_3(z)=x^*$ if $z$ is crude.
\end{itemize}
\end{lemma}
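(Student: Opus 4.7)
The plan is to exploit the incomparability between crude and coding vectors in $\cZ$ to define $F_3$ cooperatively, then realize it by an explicit cooperative, bi-quadratic Boolean circuit meeting the $L$-function size and depth bounds. A crude vector has some length-$k$ chunk equal to $\vec{0}$ and some chunk equal to $\vec{1}$, whereas every chunk of a coding vector lies in $C_k$ and hence has exactly $k/2$ ones. Consequently no coding vector is coordinatewise comparable to any crude vector in $\cZ$, and any two distinct coding vectors are pairwise incomparable; together with the fact that $F_3$ is constant on the crude inputs, this shows that the partial function sending a coding $z$ to $z \restrict 2^{k\ell}$ and a crude $z$ to $x^*$ is cooperative on $\cZ$. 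By Proposition~\ref{extendprop}(b) it extends to a cooperative total function on $2^{k\ell+r}$. Corollary~\ref{techlem} alone does not suffice because it gives no control on how size and depth grow with~$n$, so we must exhibit a particular cooperative extension built by an economical circuit.

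Partition $z \in 2^{k\ell+r}$ into chunks $z^0,\dots,z^{\ell+m-1}$ of length $k$ (writing $r = km$), set $a_j = \bigvee(z^j)$ and $b_j = \bigwedge(z^j)$, and then
\[ A = \bigvee_{j < \ell+m} b_j, \qquad B = \bigwedge_{j < \ell+m} a_j. \]
A coding input has $A = 0$ and $B = 1$ (no chunk is all-$1$s, every chunk has at least one $1$), while a crude input has $A = 1$ and $B = 0$. For each output coordinate $\mu \in [k\ell]$ define
\[ y_\mu = \begin{cases} x_\mu \vee A & \text{if } x^*_\mu = 1, \\ x_\mu \wedge B & \text{if } x^*_\mu = 0. \end{cases} \]
A short case analysis yields $y = x$ on coding inputs and $y = x^*$ on crude inputs, and since $A$, $B$, and the combining gates are all cooperative, $F_3$ is cooperative on all of $2^{k\ell+r}$.

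Implementing these formulas bi-quadratically requires care: for $\mu \leq k\ell$ the input $z_\mu = x_\mu$ must feed the $A$-computation, the $B$-computation, and the final gate producing $y_\mu$ — three downstream users against the bi-quadratic budget of two. I resolve this by inserting a level-$1$ duplication that produces two copies $u_i, v_i$ of each input $z_i$, so every input fans out to exactly two level-$1$ variables. The $u_i$'s feed the $a_j$-trees built by Proposition~\ref{longconprop} and, in parallel, COPY-chains that carry each $x_\mu$ forward to the output level; the $v_i$'s feed the $b_j$-trees only. Each $u_i$ then fans out to at most two subsequent variables (one pairwise OR in the $a_j$-tree plus one COPY-gate) and each $v_i$ to at most one. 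Proposition~\ref{longconprop} builds the $a_j, b_j$ in constant depth (since $k$ is fixed) and aggregates them into $A$ and $B$ in additional depth $O(\log(\ell+m)) = O(\log \log n)$ with $O(\log n)$ variables; Proposition~\ref{duplprop} duplicates $A$ and $B$ enough to feed the $\leq k\ell$ final combining gates in a further $O(\log(k\ell)) = O(\log \log n)$ depth and $O(\log n)$ variables; the carry-chains match the total depth, contributing $O(\log n \cdot \log \log n)$ further variables. A single OR or AND gate per output coordinate finishes the construction. Overall the circuit has depth $O(\log \log n)$ and size $O(\log n \cdot \log \log n)$, well inside the bounds $\gamma \log n$ and $\gamma (\log n)^2$ that certify $F_3$ as an $L$-function. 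The main obstacle throughout is the bi-quadratic fan-out bookkeeping on the input variables, which the initial duplication layer resolves cleanly.
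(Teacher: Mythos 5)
Your construction is essentially the paper's own proof: your $A$ and $B$ are exactly the paper's $ONE$ and $ZERO$ detectors, and your coordinatewise combination keyed to the support of $x^*$ is the same formula~(\ref{x*trickeqn}), with the same case analysis on coding versus crude inputs. The only difference is that you spell out the bi-quadratic circuit implementation (input duplication, aggregation trees via Propositions~\ref{longconprop} and~\ref{duplprop}, copy chains) that the paper dismisses as straightforward, and your size and depth estimates are correct and comfortably within the $L$-function bounds.
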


\begin{proof}
Let $a = \ell + \frac{r}{k} - 1$.
For $z=(z^0, \ldots, z^a)$ define
\[ ONE(x)=\vee(\wedge(z^0), \ldots, \wedge(z^a)), \qquad
 ZERO(x)=\wedge(\vee(z^0), \ldots, \vee(z^a)).\]
Observe that $ONE(z)=1$ implies that $z$ is crude or $z \notin \cZ$ and $ZERO(z)=0$ also implies that $z$ is crude or $z \notin \cZ$.

For simplicity of notation, identify~$x^*$ with the set of coordinates where this vector takes the value~1.
Define the output of~$F_3$ coordinatewise for $i < k\ell$ by

\begin{equation}\label{x*trickeqn}
\begin{split}
(F_3(z))_i &=z_i \vee ONE(x) \qquad \mbox{for} \qquad i\in x^*\\
(F_3(x))_i &=z_i \wedge ZERO(x) \qquad \mbox{for} \qquad i\notin x^*.
\end{split}
\end{equation}

Now, if $z \in \cZ$ is not crude, then $x = z\upharpoonright 2^{k\ell}$ is coding and so $x_i \vee ONE(x)=x_i$, $x_i \wedge ZERO(x)=x_i$, and $F_3(z)=x$.  On the other hand, because of the choice of clauses in~(\ref{x*trickeqn}), $F_3(z)=x^*$ if $z$ is crude.

Building a cooperative Boolean circuit of suitable depth and size that depend on~$\ell$ and hence on~$n$ with $(1+\eps)\log(n) = k\ell$ that implements the calculations of $ONE(z)$, $ZERO(z)$, and~(\ref{x*trickeqn}) is straightforward using the technique of the proof of Lemma~\ref{f1lem0}.
\end{proof}

\section{Conclusion and future directions}\label{concludesec}

We have constructed examples of cooperative Boolean systems with synchronous updating in which most initial conditions belong to the basin of attraction of a single  attractor that can be of length $> c^N$ for any $c < 2$ and sufficiently large~$N$.  This contrasts sharply with results on
various other types of dynamical systems that show nongenericity or absence of non-steady state attractors under the assumption of cooperativity.  These systems are also bi-quadratic.  It is shown in~\cite{JE} that upper bounds on the proportion of monic regulatory functions imply nontrivial upper bounds on~$c$ for bi-quadratic cooperative Boolean systems with an attractor of length~$> c^N$; in particular, if this proportion is zero, then $c < 10^{1/4}$. In the follow-up paper~\cite{JMII} we will re-examine this upper bound in the context of $p$-$c$-chaos.

Exponentially long attractors in Boolean systems are a hallmark of chaotic dynamics.  Two other important hallmarks of chaos are a small fraction of eventually frozen nodes and high sensitivity to initial conditions.
Sensitivity to initial conditions can be formalized in a variety of ways.
For example, let us call a Boolean network~\emph{$p$-unstable} if a random single-bit flip in a randomly chosen initial condition moves the system to the basin of attraction of a different attractor with probability~$>p$. The systems that we constructed in this paper are extremely chaotic with respect to attractor length and the expected proportion of eventually frozen nodes, but also $p$-coalescent, which contradicts $1-p$-instability. In the follow-up paper~\cite{JMII} we will
construct, for any given $0 < p < 1 < c < 2$,  examples of Boolean systems that are simultaneously $p$-$c$-chaotic and $p$-unstable.
However, Theorem~5 of~\cite{JE} shows that for $\sqrt{3} < c$  and $p > 0.75 + \frac{\ln 0.5c}{2\ln 0.75}$ no cooperative Boolean system that uses only binary AND and binary OR as regulatory functions can be simultaneously $p$-unstable and have an attractor of length~$\geq 0.75$. We will improve upon this result in the context of~$p$-$c$-chaos and also explore the relation to other notions of sensitive dependence on initial conditions in~\cite{JMII}.

\end{document}